\renewcommand{\Box}{\framebox{\rule{0.3em}{0.0em}}}
\newtheorem{thm}{Theorem}[section]
\newtheorem{proposition}{Proposition}[section]
\newtheorem{example}{Example}[section]
\newtheorem{defn}{Definition}[section]
\newtheorem{corollary}{Corollary}[section]
\newcommand{\bgeqn}{\begin{eqnarray}}
\newcommand{\edeqn}{\end{eqnarray}}
\newcommand{\bgeq}{\begin{eqnarray*}}
\newcommand{\edeq}{\end{eqnarray*}}
\newcommand{\bgc}{\begin{center}}
\newcommand{\edc}{\end{center}}
\renewcommand{\Box}{\hfill \rule{2.3mm}{2.3mm}}
\newenvironment{proof}{\noindent{\bf Proof. }}{\hfill $\Box$\medskip}
\title{Necessary optimality conditions for  implicit control systems with applications to control of differential algebraic equations}
\author{An Li\thanks{\baselineskip 9pt School of Mathematical Sciences, Xiamen University, Xiamen 361005, Fujian, China. The research of this author was partially
supported by the National Natural Science Foundation of China (Grant No. 11671335), the Natural Science Foundation of Fujian Province, China (Grant No. 2016J01033) and the Fundamental Research Funds for the Central Universities (Grant No. 20720160036).}
\ \ and \ \
Jane J. Ye\thanks{Corresponding author. Department of Mathematics and Statistics, University of Victoria, Victoria, B.C., Canada V8W 2Y2,
e-mail: janeye@uvic.ca.  The research of this author was  supported by NSERC.}\\
{\small Dedicated to the memory of Jonathan Michael Borwein}
}
\date{}
\begin{document}
\maketitle

\baselineskip 16pt

\vspace{4pt}{\bf Abstract.} In this paper we derive necessary optimality conditions for optimal control problems with nonlinear and nonsmooth implicit  control systems.
Implicit control systems have wide applications including differential algebraic equations (DAEs). The challenge in the study of implicit control system lies in that the system may be truly implicit, i.e., the Jacobian matrix of the constraint mapping may be singular.
Our necessary optimality conditions hold  under the so-called weak basic constraint qualification plus the calmness of a perturbed constraint mapping.  Such constraint qualifications allow for singularity of the Jacobian and hence is suitable for implicit systems.
Specifying these results to control of semi-explicit DAEs we obtain necessary optimality conditions for control of semi-explicit DAEs with index higher than one.

\vspace{4pt}{\bf Key Words}
Necessary optimality conditions, Optimal control, Implicit control systems, Differential algebraic equations, Calmness, Variational analysis

\vspace{4pt}{\bf AMS subject classification: 45K15, 49K21,49J53}

\medskip

\baselineskip 16pt
\parskip 2pt



\maketitle

\baselineskip 16pt

\medskip

\baselineskip 16pt
\parskip 2pt

\newpage

\section{Introduction.}
Given a time interval $[t_0,t_1]\subseteq \mathbb{R}$, very often, the dynamic behavior of a system is most naturally modeled as an implicit control system:
\begin{eqnarray*}
{\rm ICS}~~~~~~~~~~
\begin{array}{l}
 \varphi (x(t), u(t),\dot{x}(t))\in K_{\varphi}\,\quad a.e. \,t\in [t_0,t_1],\\
 u(t) \in U \quad a.e. \,t\in [t_0,t_1],\nonumber\\
 (x(t_0), x(t_1))\in S,\nonumber
 \end{array}
\end{eqnarray*}
where
$\varphi :\mathbb{R}^{n_{x}}\times \mathbb{R}^{n_{u}}\times \mathbb{R}^{n_{x}}\rightarrow \mathbb{R}^{m}$,
$K_{\varphi}\subseteq \mathbb{R}^m$,
$U\subseteq \mathbb{R}^{n_{u}}$, $S\subseteq\mathbb{R}^{n_{x}}\times \mathbb{R}^{n_{x}}$.

A particular case of the implicit control system  is
described by scalar equations, namely, differential algebraic equations (DAEs):
\begin{eqnarray*}
{\rm DAE}~~~~~~~~~~
\begin{array}{l} \varphi (x(t), u(t),\dot{x}(t))=0\,\quad a.e. \,t\in [t_0,t_1],\\
 u(t) \in U \quad a.e. \,t\in [t_0,t_1],\nonumber\\
 (x(t_0), x(t_1))\in S.\nonumber
 \end{array}
\end{eqnarray*}
A very popular model of a DAE is the so-called semi-explicit DAE:
\begin{eqnarray*}
 {\rm seDAE}~~~~~~~~~~
\begin{array}{l} \dot{x}(t)=\phi(x(t),y(t), u(t))\,\quad a.e. \,t\in [t_0,t_1],\\
 0=h(x(t),y(t), u(t))\quad a.e. \,t\in [t_0,t_1],\nonumber\\
 u(t) \in U \quad a.e. \,t\in [t_0,t_1],\nonumber\\
 (x(t_0), x(t_1))\in S,
\end{array}
\end{eqnarray*}
where
$\phi :\mathbb{R}^{n_{x}}\times \mathbb{R}^{n_{y}}\times \mathbb{R}^{n_{u}}\rightarrow \mathbb{R}^{n_{x}}$,
$h :\mathbb{R}^{n_{x}}\times \mathbb{R}^{n_{y}}\times \mathbb{R}^{n_{u}}\rightarrow \mathbb{R}^{n_y}$.

In the past couple decades, DAEs have become a very
important generalization of ordinary differential equations (ODEs) and have numerous
applications in mathematical modeling of various dynamical processes; see e.g. \cite{pet,BCM,Gerdts,roub} and the references therein.

In this paper we  study  the optimal control problem of an  implicit  system:
\begin{eqnarray*}
(P_{ICS})~~~~~~\min &&  J(x,u):=\int_{t_0}^{t_1} F( x(t), u(t),\dot{x}(t)) dt + f(x(t_0),x(t_1))\nonumber \\
s.t. && \varphi(x(t), u(t),\dot{x}(t))\in K_{\varphi}\,\quad a.e. \,t\in [t_0,t_1],\\
&& u(t) \in U \quad a.e. \,t\in [t_0,t_1],\nonumber\\
&& (x(t_0), x(t_1))\in S,\nonumber
\end{eqnarray*}
where $F:\mathbb{R}^{n_{x}}\times \mathbb{R}^{n_{u}}\times \mathbb{R}^{n_{x}}\rightarrow \mathbb{R}$, $f:\mathbb{R}^{n_{x}}\times \mathbb{R}^{n_{x}}\rightarrow \mathbb{R}$. Our basic assumptions for problem $(P_{ICS})$ are very general.  We assume all sets involved are closed and all functions involved are locally Lipschitz continuous.


To our knowledge, there is very little   done for implicit control problems stated in such a general form as in ${( P_{ICS})}$.   In \cite[Theorem 1.1]{ledyaev}, for problem $(P_{ICS})$ with free end point,   Devdariani and Ledyaev  derived a  necessary optimality condition in a form that closely resembles the classical Pontryagin  maximum principle  with an implicitly defined Hamiltonian.
For control of semi-explicit DAEs, de Pinho and Vinter \cite{pinhovinter} derived a strong maximum principle under the assumption that the velocity set is convex and a weak maximum principle without the convexity assumption. Moreover a counter example in \cite{pinhovinter} shows that the strong maximum principle may not hold if  the velocity set is nonconvex.  The assumption on the convexity of the velocity set in  \cite[Theorem 3.1]{pinhovinter} was relaxed for the Bolza problem in \cite{roub}. A key assumption for the maximum principles in \cite{pinhovinter} to hold is   that the Jacobian matrix $\nabla_y h$ must be  nonsingular along the optimal pair. This means that the maximum principles derived in  \cite{pinhovinter} can  only be applied to  control of seDAEs with  index one. Recently some necessary optimality conditions for control of DAEs with higher indexes have been derived \cite{roub,KunkelM,Gerdts}.


 In this paper, we  aim at deriving necessary optimality conditions for a (weak) local minimum of radius $R(\cdot)$ for nonsmooth problems $(P_{ICS})$ in the following sense.
A control or control function $u(\cdot)$ is a measurable function on $[t_0,t_1]$ such that $u(t)\in U$ for almost every $t \in [t_0,t_1]$. The state or state trajectory, corresponding to a given control $u(\cdot)$, refers to an absolutely continuous function  $x(\cdot)$ which together with $u(\cdot)$ satisfying all conditions in (ICS). We call such a pair $(x(\cdot), u(\cdot))$ an admissible pair.  For simplicity we may omit the time variable and write $x, u$ instead of $ x(\cdot), u(\cdot)$,  respectively. Let {$R(t):[t_0,t_1]\rightarrow (0,+\infty]$ }be a radius function. We say
 that $(x_{*}, u_{*})$ is a local minimum of radius $R(\cdot)$ for  ${( }P_{ICS} {)}$ if 	 $(x_{*}, u_{*})$
minimizes the value of the cost
function $J(x, u)$ over all admissible pairs $(x,u)$ which satisfies
\begin{eqnarray}
&& | x(t)-x_{*}(t)|\leq\varepsilon \mbox{ a.e.} \,t\in [t_0,t_1], \quad  \int_{t_0}^{t_1}|\dot{x}(t)-\dot{x}_{*}(t)|dt\leq \varepsilon,\nonumber \\
&& |(u(t), \dot{x}(t))-(u_{*}(t), \dot{x}_*(t))|\leq R(t) \mbox{ a.e.} \,t\in [t_0,t_1].\label{radiusf}
\end{eqnarray}
This local minimum concept is even weaker than the so-called $W^{1,1}$ local minimum which is the case when $R(t)\equiv\infty$, because of the additional restriction (\ref{radiusf}) stemming  from the radius function. Note that $W^{1,1}$ local minimum is known to be weaker than the classical strong local minimum which has only the restriction that $| x(t)-x_{*}(t)|\leq\varepsilon \mbox{ a.e.}$.

 In \cite[Theorem 6.1]{cp}, Clarke and de Pinho obtained a set of necessary optimality conditions for problem $(P_{ICS})$ with $K_\varphi=\{0\}$ under the above concept of weak  local minimum. In  \cite[Theorem 2.1]{pinho*}, this result is extended to the problem $(P_{ICS})$ without the restriction of $K_\varphi=\{0\}$ under the classical strong local minimum concept. Moreover the result for the  smooth case  is further investigated in \cite{pinho*}. These necessary optimality conditions, however, require   the calibrated constraint qualification (CCQ)
 which is  stronger than the classical Mangasarian Fromovitz Condition (MFC) in optimal control theory, which is in turn stronger than the Mangasarian-Fromvitz constraint qualification (MFCQ) in mathematical programming. The main purpose of this paper is to derive necessary optimality conditions
 in the form of \cite[Theorem 6.1]{cp} and \cite[Theorem 2.1]{pinho*}  under weaker constraint qualifications.

 Following the same strategy   as proposed in     \cite{cp,pinho*}, by introducing a vector variable $v(t):=\dot{x}(t)$, we   transform $(P_{ICS})$ into the following equivalent problem:
\begin{eqnarray*}
(P_{ECS})~~~~~~\min &&  J(x,u):=\int_{t_0}^{t_1} F( x(t), u(t),v(t)) dt + f(x(t_0),x(t_1))\nonumber \\
s.t. && \dot{x}(t)=v(t)\,\quad a.e. \,t\in [t_0,t_1],\nonumber\\
&& \varphi(x(t), u(t),v(t))\in K_{\varphi}  \quad a.e. \,t\in [t_0,t_1], \nonumber\\
&& u(t) \in U \quad a.e. \,t\in [t_0,t_1],\nonumber\\
&& (x(t_0), x(t_1))\in S,\nonumber
\end{eqnarray*}
obtain a set of necessary optimality conditions for problem $(P_{ECS})$ and then transform back to the one for the original problem $(P_{ICS})$.
Problem $(P_{ECS})$ belongs to the class of optimal control { problems} with mixed state and control constraints. A set of necessary optimality conditions for a local minimum of radius $R(\cdot)$ for this class of problems  has been developed in  Clarke and de Pinho  \cite[Theorem 4.3]{cp} under the CCQ.
Motivated by the recent progress in mathematical programming towards deriving necessary optimality conditions for mathematical programs under  constraint qualifications such as the calmness condition which is weaker than MFCQ, Li and Ye \cite{anli} proposed the so-called weak basic constraint qualification (WBCQ) plus the calmness of the perturbed constraint mapping
\begin{equation}
{M}_\varphi(\Theta):=\left\{(x,u,v)\in \mathbb{R}^{n_{x}}\times U \times \mathbb{R}^{n_{x}}:
                                           \varphi(x,u,v)+ \Theta\in K_{\varphi}
\right\},\label{perturb4.1n}
\end{equation} and obtained necessary optimality conditions for a local minimum of radius $R(\cdot)$ for the optimal control problem with mixed state and control constraints. Note that the concept of a local minimum of radius $R(\cdot)$ is slightly stronger than the one defined as in (\ref{radiusf}). In this paper we first show that result of \cite[Theorem 4.2]{anli} {remains true for the weaker local optimality concept in this paper} and apply it to $(P_{ECS})$ to obtain necessary optimality conditions of $(P_{ICS})$ under the desired constraint qualification.

In the case of DAEs with optimal controls lying in the interior of the control set, MFC is equivalent to the maximum rank of the Jacobian matrix {$\nabla_{\dot{x}} \varphi $}  and in the case of semi-explicit DAEs, it amounts to that the problem is index one.  Applying our results for the control of DAEs to the optimal control of   semi-explicit DAEs, we derive necessary optimality conditions for control of semi-explicit DAEs with index higher than one. In our necessary optimality conditions, the form of the  maximum principle for control of semi-explicit DAEs is  the weak maximum principle as in  \cite[Theorem 3.2]{pinhovinter} plus some extra condition called the Weierstrass condition. Hence in the autonomous case, our necessary optimality condition is a  maximum principle stronger  than  \cite[Theorem 3.2]{pinhovinter} under weaker constraint qualifications.

The paper is organized as follows. Section 2 contains preliminaries  on variational analysis.
In section 3, we derive  necessary optimality conditions for an autonomous  optimal control problems with mixed {state and control} constraints.
In section 4, we derive  necessary optimality conditions for  the optimal control of an implicit control system.
Optimal control of semi-explicit systems are studied in section 5.  In section 6 we give  verifiable sufficient conditions for the constraint qualifications required in the paper. The proof of the main result in section 3 is given in Appendix.

\section{Background in variational analysis}

In this section we present preliminaries  on  variational analysis that will be needed in this paper. We give only concise definitions
and conclusions that will be needed in the paper.  For more detailed information on the subject  we refer the reader to  \cite{c,clsw,anli,m2,rw}.

Throughout the paper, $|\cdot|$ denotes the Euclidean norm,  $B$ and $B(x,\delta)$  the open unit ball and the open ball centered at $x$ with radius $\delta>0$, respectively.
Unless otherwise specified,
the closure, the convex hull and the closure of the convex hull of a subset $\Omega\subseteq \mathbb{R}^n$ are denoted by $\bar \Omega$, co$\Omega$, and $\overline{\rm co}\Omega$, respectively. {For a set $\Omega \subseteq \mathbb{R}^n$ and a point $x\in \mathbb{R}^n$, $d(x,\Omega)$ is the distance from point $x$ to set $\Omega$.}  For any $a,b\in \mathbb{R}^n$, $\langle a, b \rangle $ denotes the inner product of vectors $a$ and $b$. Given a mapping $\psi: \mathbb{R}^n\rightarrow  \mathbb{R}^m$ and a point $x\in \mathbb{R}^n$, $\nabla \psi(x)\in R^{m\times n}$  stands for the Jacobian of $\psi(\cdot)$ at $x$. Given a function $f: \mathbb{R}^n\rightarrow  \mathbb{R} $, $\nabla^2 f(x)$ is the Hessian matrix. {For a set-valued map  $\Psi: \mathbb{R}^n\rightrightarrows \mathbb{R}^q$,  $gph \Psi:=\{(x,y): y\in \Psi(x)\}$ is its graph, $\Psi^{-1}(y):=\{x: y\in \Psi(x)\}$ is its inverse.}

Let $S\subseteq \mathbb{R}^n$. The {\em tangent cone} to $S$ at $\bar{x}$ is defined by
$$T_{S}(\bar{x}):=\{w\in \mathbb{R}^{n}: \exists t_{k}\downarrow 0, w_{k}\rightarrow w \mbox { with }  \bar{x}+t_{k}w_{k}\in S, \forall k \}.$$ The {\em Fr\'{e}chet normal cone} to $S$ at $\bar{x}\in S$ is defined by
$$\hat{N}_{S}(\bar{x}):=\{v^{*}\in \mathbb{R}^{n}:\limsup\limits_{x \xrightarrow{ S} \bar{x}}\frac{\langle v^{*}, x-\bar{x}\rangle}{|x-\bar{x}|}\leq 0\},$$
where $x_{i} \xrightarrow{ S} {\bar{x}}$ means that $x_i\in S$ and $x_i\rightarrow \bar{x}$.
The {\em limiting normal cone} $N_{S}(\bar{x})$ to $S$ is defined by
$$N_{S}(\bar{x}):=\{\lim\zeta_{i}:\zeta_{i}\in \hat{N}_{S}(x_{i}),x_{i} \xrightarrow{ S} {\bar{x}}\}.$$
{$S$ is said to be  normally regular if $\hat{N}_{S}(\bar{x})=N_{S}(\bar{x})$ for all $\bar{x}\in S$.}
Recently Gfrerer \cite{helmut13} introduced the concept of the directional limiting normal cone. The {\em limiting  normal cone to $S$ in direction $w\in \mathbb{R} ^{n}$} at $\bar{x}$ is defined by
$$N_{S}(\bar{x};w):=\{v^{*}\in \mathbb{R}^{n}: \exists t_{k}\downarrow 0, w_{k}\rightarrow w,  v_{k}^{*}\rightarrow  v^{*} \mbox { s.t. }v_{k}^{*}\in \hat{N}_{S}(\bar{x}+t_{k}w_{k}), \forall k\}.$$

Consider a lower semicontinuous function $f:\mathbb{R}^n\rightarrow \mathbb{R}\cup\{+\infty\}$ and a point $\bar{x}\in \mathbb{R}^n$ where $f$ is finite.
A vector $\zeta\in \mathbb{R}^n$ is called a {\em proximal subgradient} of $f$ at $\bar{x}$ provided that there exist $\sigma,\delta>0$ such that
$$f(x)\geq f(\bar{x})+\langle\zeta,x-\bar{x}\rangle-\sigma|x-\bar{x}|^{2},\forall x\in B(\bar{x},\delta).$$
The set of such $\zeta$  is denoted $\partial^Pf(\bar{x})$ and referred to as the {\em proximal subdifferential}.
The {\em limiting subdifferential} of $f$ at $\bar{x}$  is the set
$$\partial f(\bar{x}):=\{\lim\zeta_{i}:\zeta_{i}\in \partial^Pf(x_{i}),x_{i}\rightarrow \bar{x}, f(x_{i})\rightarrow f(\bar{x})\}.$$
For a locally Lipschitz function $f$ on $\mathbb{R}^n$,  the {\em generalized gradient} $\partial^Cf(\bar{x})$ coincides with
$co\partial f(\bar{x})$; further the associated {\em Clarke normal cone}
$N^{C}_{S}(\bar{x})$ at $\bar{x}\in S$ coincides with $\overline{co}N_{S}(\bar{x})$.


We now review some concepts of Lipschitz continuity of set-valued maps.
\begin{defn}\cite{robin}
A set-valued map $\Psi: \mathbb{R}^n\rightrightarrows \mathbb{R}^q$ is said to be upper-Lipschitz at $\bar{x}$ if
there exist $\mu\geq 0$ and a neighborhood $U(\bar{x})$ of $\bar{x}$ such that
$$\Psi(x)\subseteq \Psi(\bar{x})+\mu|x-\bar{x}|\bar{B},  \,\,\forall x\in U(\bar{x}).$$
\end{defn}
\begin{defn}\cite[Definition 1.40]{m2} \label{def2.2}
A set-valued map $\Psi: \mathbb{R}^n\rightrightarrows \mathbb{R}^q$ is said to be pseudo-Lipschitz (or locally Lipschitz like or has the Aubin property) around $(\bar{x},\bar{y})\in gph\Psi$ if there exist
 $\mu\geq 0$ and  neighborhoods $U(\bar{x})$, $U(\bar{y})$ of $\bar{x}$ and $\bar{y}$, respectively, such that
$$\Psi(x)\cap U(\bar{y})\subseteq \Psi({x'})+\mu|x-{x'}|\bar{B},  \,\,\forall x,x'\in U(\bar{x}).$$
{Equivalently, $\Psi$ is pseudo-Lipschitz around $(\bar{x},\bar{y})$ if there exist
 $\mu\geq 0$ and neighborhoods $U(\bar{x})$, $U(\bar{y})$ of $\bar{x}$ and $\bar{y}$, respectively, such that
 $$d(y, \Psi({x'}) )\leq \mu d({x'}, \Psi^{-1}(y)) \qquad \forall  x'\in U(\bar{x}), y\in  U(\bar{y}).$$}
\end{defn}
\begin{defn} \cite{y05,rw}
A set-valued map $\Psi: \mathbb{R}^n\rightrightarrows \mathbb{R}^q$ is said to be calm (or pseudo upper-Lipschitz continuous) at $(\bar{x},\bar{y})\in gph\Psi$ if
there exist $\mu\geq 0$ and neighborhoods $U(\bar{x})$, $U(\bar{y})$ of $\bar{x}$ and $\bar{y}$, respectively, such that
$$\Psi(x)\cap U(\bar{y})\subseteq \Psi(\bar{x})+\mu|x-\bar{x}|\bar{B},  \,\,\forall x\in U(\bar{x}).$$
{Equivalently, $\Psi$ is calm  around $(\bar{x},\bar{y})$ if there exist
 $\mu\geq 0$ and a neighborhood $U(\bar{y})$ of  $\bar{y}$  such that
 $$d(y, \Psi({\bar{x}}) )\leq \mu d(\bar{x}, \Psi^{-1}(y)) \qquad \forall  y\in  U(\bar{y}).$$}
\end{defn}
\begin{defn} \cite{Ioffe}
{A set-valued map $\Sigma: \mathbb{R}^q \rightrightarrows \mathbb{R}^n$ is said to be metrically subregular at $(\bar{y},\bar{x})\in  gph\Sigma$  if there exist
 $\mu\geq 0$ and a neighborhood $U(\bar{y})$ of  $\bar{y}$  such that
 $$d(y, \Sigma^{-1}({\bar{x}}) )\leq \mu d(\bar{x}, \Sigma(y)) \qquad \forall  y\in  U(\bar{y}).$$}
\end{defn}
{From definition, it is easy to see that a set-valued map $\Sigma$ is metrically subregular at $(\bar{y},\bar{x})\in  gph\Sigma$  if and only if its inverse map $\Sigma^{-1}$ is calm at $(\bar{x},\bar{y})\in  gph\Sigma^{-1}$.}

{In this paper we are mostly interested in the calmness of a set-valued map
defined as the perturbed constrained system:
\begin{equation}
M(\Theta):=\{(x,u)\in \mathbb{R}^{n_{x}}\times U: \Phi(x,u) +\Theta \in \Omega\},\label{perturb3.1}
\end{equation}
where $\Phi:\mathbb{R}^{n_{x}}\times \mathbb{R}^{n_{u}}\rightarrow  \mathbb{R}^d$ and $U\subseteq \mathbb{R}^{n_{u}}, \Omega \subseteq \mathbb{R}^d.$

We now summarize some constraint qualifications that will be used in the paper.
\begin{defn}\label{defn2.5}  Let $(\bar x,\bar{u})\in M(0)$, $\Phi$ is Lipschitz continuous at $(\bar x,\bar u)$ and $U, \Omega$ are closed.
\begin{itemize}

\item (\cite{cp}) We say  the calibrated constraint qualification (CCQ)  holds at $(\bar x, \bar u)$ if there exists $\mu >0$ such that
\begin{eqnarray*}
\left \{ \begin{array}{l}
(\alpha,\beta)\in \partial\langle \lambda, \Phi\rangle(\bar{x},\bar{u})+\{0\}\times N_{U}(\bar{u}),\\
\lambda\in N_{\Omega}(\Phi(\bar{x},\bar{u}))
\end{array} \right.  && \Longrightarrow |\lambda|\leq \mu |\beta|.
\end{eqnarray*}
\item (\cite{cp} We say  the MFC holds at $(\bar x, \bar u)$ if
\begin{eqnarray*}
\left \{ \begin{array}{l}
(\alpha,0)\in \partial\langle \lambda, \Phi\rangle(\bar{x},\bar{u})+\{0\}\times N_{U}(\bar{u}),\\
\lambda\in N_{\Omega}(\Phi(\bar{x},\bar{u}))
\end{array} \right.  && \Longrightarrow\lambda=0.
\end{eqnarray*}
\item (\cite{m2}) We say  the no nonzero abnormal multiplier constraint qualification (NNAMCQ) holds at $(\bar x, \bar u)$ if
\begin{eqnarray*}
\left \{ \begin{array}{l}
(0,0)\in \partial\langle \lambda, \Phi\rangle(\bar{x},\bar{u})+\{0\}\times N_{U}(\bar{u}),\\
\lambda\in N_{\Omega}(\Phi(\bar{x},\bar{u}))
\end{array} \right.  && \Longrightarrow\lambda=0.
\end{eqnarray*}
\item (\cite{anli}) We say  the weak basic constraint qualification (WBCQ) holds at $(\bar x, \bar u)$ if
\begin{eqnarray*}
\left\{ \begin{array}{l} (\alpha,0)\in \partial\langle \lambda, \Phi\rangle(\bar{x},\bar{u})+\{0\}\times N_{U}(\bar{u}),\\
\lambda\in N_{\Omega}(\Phi(\bar{x},\bar{u}) )
\end{array} \right. \qquad \Longrightarrow\alpha=0. \label{cq1section2}
\end{eqnarray*}
\end{itemize}
\end{defn}
It is easy to check that the following implications hold:
$$ \mbox{CCQ} \Longrightarrow  \mbox{MFC} \Longleftrightarrow \mbox{WBCQ+NNAMCQ}  \Longrightarrow \mbox{WBCQ + Calmness of $M$},$$
and the WBCQ$+$Calmness of $M$ may not imply NNAMCQ (see \cite[Example 2.1]{anli}).}
Although in general CCQ is stronger than MFC, if MFC holds for every point in certain compact set, then it implies CCQ
for every point in the same compact set under certain assumptions; see \cite[Proposition 4.6]{cp} for details.

%
%

\section{Optimal control problems with mixed state and control constraints}
\label{section3}

In this section,  we consider   the following autonomous optimal control problem  in which the
 state and control variables are subject to mixed {state and control} constraints:
\begin{eqnarray*}
(P)~~~~~~\min &&  J(x,u):=\int_{t_0}^{t_1} F( x(t), u(t)) dt + f(x(t_0),x(t_1))\nonumber \\
s.t. && \dot{x}(t)=\phi(x(t), u(t)) \,\quad a.e. \,t\in [t_0,t_1],\nonumber\\
&& \Phi(x(t), u(t))\in \Omega \quad a.e. \,t\in [t_0,t_1], \\
&& u(t) \in U \quad a.e. \,t\in [t_0,t_1],\nonumber\\
&& (x(t_0), x(t_1))\in S,\nonumber
\end{eqnarray*}
{where $F:\mathbb{R}^{n_{x}}\times \mathbb{R}^{n_{u}}\rightarrow \mathbb{R}$, {$f:\mathbb{R}^{n_{x}}\times \mathbb{R}^{n_{x}}\rightarrow \mathbb{R}$,}
$\phi:\mathbb{R}^{n_{x}}\times \mathbb{R}^{n_{u}}\rightarrow \mathbb{R}^{n_{x}}$, $\Phi:\mathbb{R}^{n_{x}}\times \mathbb{R}^{n_{u}}\rightarrow  \mathbb{R}^d$ and $U\subseteq \mathbb{R}^{n_u}, \Omega \subseteq \mathbb{R}^d, S \subseteq \mathbb{R}^{n_x}\times  \mathbb{R}^{n_x}$. Unless otherwise stated, in this section we assume that  $F, f,
\phi, \Phi$ are locally Lipschitz continuous, and the sets
 $U,\Omega$, $S$ are  closed.}

Let $R:[t_0,t_1]\rightarrow (0,+\infty]$ be a given measurable radius function.
As in \cite{cp}, we say that  an admissible pair $(x_{*},u_{*})$ is a  local minimum of radius $R(\cdot)$ for problem $(P)$ if it  minimizes the value of the cost
function $J(x, u)$ over all admissible pairs $(x,u)$ which satisfies
$$| x(t)-x_{*}(t)|\leq\varepsilon, \,|u(t)-u_{*}(t)|\leq R(t) \mbox{ a.e.,} \int_{t_0}^{t_1}|\dot{x}(t)-\dot{x}_{*}(t)|dt\leq \varepsilon.$$
For any given $\varepsilon>0$ and a given radius function $R(\cdot)$, define
\begin{eqnarray*}
&&\tilde{S}_*^{\varepsilon,R}(t):=\{(x,u)\in  \bar{B}(x_*(t),\varepsilon)\times U: \Phi(x,u)\in \Omega,   |u-u_*(t)|\leq R(t)\},\\
&&\tilde{C}_*^{\varepsilon,R}:=cl\{(t,x,u)\in [t_0,t_1]\times \mathbb{R}^{n_{x}}\times \mathbb{R}^{n_{u}}:  (x,u)\in \tilde{S}_*^{\varepsilon,R}(t)\},
\end{eqnarray*}
where $cl$ denotes the closure. {In the case where the control set $U$ is  {closed},  the optimal control $u_*(t)$  is continuous and the radius function $R(t)$ is  either identical to $\infty$ or continuous, the closure operation is superfluous and hence can be removed. A sufficient condition for the compactness of the  set $\tilde{C}_*^{\varepsilon,R}$ is that {$\varepsilon
<\infty$ and either $U$ is compact or $u_*(t)$ is continuous and  $R(t)$ is either identical to $\infty$ or continuous.}

The main result of this section is the following theorem {whose proof can be found in the appendix.}
\begin{thm}\label{thm4.2new}  Let  $(x_*,u_*)$ be a local minimum of radius $R(\cdot)$ for $(P)$.
 Suppose that there exists $\delta>0$ such that $R(t)\geq \delta$.
  Suppose that $\tilde{C}_*^{\varepsilon,R}$ is compact and  for
 all $(t,x,u) \in \tilde{C}_*^{\varepsilon, R}$  the WBCQ holds:
\begin{eqnarray}
\left \{ \begin{array}{l}  (\alpha,0)\in \partial\langle \lambda, \Phi\rangle(x,u)+\{0\}\times N_{U}(u),\\
\lambda\in N_{\Omega}(\Phi(x,u))\end{array} \right. \Longrightarrow \alpha=0 \label{WBCQ1}
\end{eqnarray}
and the mapping $M$ defined as in (\ref{perturb3.1}) is calm at $(0, x,u)$.
Then there exist an arc $p$ and   a number $\lambda _{0}$ in $\{0,1\}$,
satisfying the nontriviality condition
$(\lambda _{0},p(t))\neq0, \forall t\in[t_0,t_1]$,
 the transversality condition
$$(p(t_0),-p(t_1)) \in \lambda_0 \partial f(x_*(t_0),x_*(t_1))+N_S (x_*(t_0),x_*(t_1)),$$
and the Euler adjoint inclusion for almost every $t$:
\begin{eqnarray}
\lefteqn{(\dot{p}(t), 0) \in \partial^C \{\langle -p(t), \phi\rangle+\lambda_0F\} ( x_*(t),u_*(t))+\{0\}\times N^C_{U}(u_*(t))}\nonumber \\
&& +co\{ \partial\langle  \lambda,  \Phi\rangle ( x_*(t),u_*(t)): \lambda\in N_{\Omega} (\Phi(x_*(t),u_*(t)){)}\}, \label{EulerNnew}
\end{eqnarray}
as well as the Weierstrass condition of radius $R(\cdot)$ for almost every $t$:
\begin{eqnarray*}
&&\Phi(x_*(t),u)\in \Omega, u\in U, \, \,  |u-u_*(t)|< R(t)\Longrightarrow\\
&&\langle p(t),\phi(x_*(t),u) \rangle-\lambda_{0}F(x_*(t),u)\leq \langle p(t), \phi (x_*(t),u_*(t)) \rangle -\lambda_{0}F(x_*(t),u_*(t)).
\end{eqnarray*}
Moreover in the case of free end point, $\lambda_0$ can be taken as $1$.
\end{thm}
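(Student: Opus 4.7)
The plan is to reduce $(P)$ to a penalized problem in which the mixed constraint $\Phi(x,u)\in\Omega$ is replaced by an exact penalty in the cost, apply a known maximum principle for optimal control problems having only a pure control constraint, and then convert the penalty-derived subdifferentials back into multipliers $\lambda\in N_\Omega(\Phi(x_*,u_*))$ using the WBCQ. This is the strategy of \cite[Theorem 4.2]{anli}; the novelty here is that the hypotheses must be checked for the weaker local minimum concept in (\ref{radiusf}).

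First I would use the pointwise calmness of $M$, combined with compactness of $\tilde C_*^{\varepsilon,R}$, to obtain uniform constants $\mu>0$ and $\eta>0$ such that for every $t\in[t_0,t_1]$ and every $(x,u)\in B(x_*(t),\eta)\times\bigl(B(u_*(t),\eta)\cap U\bigr)$,
\[
d\bigl((x,u),M(0)\bigr)\le\mu\,d\bigl(\Phi(x,u),\Omega\bigr).
\]
A measurable selection theorem then allows me to project any nearby candidate pair onto $M(0)$ as a $t$-measurable function, and the bound $R(t)\ge\delta>0$ ensures that such a projection can be kept within $R(\cdot)$ of $(x_*(\cdot),u_*(\cdot))$ when $(x,u)$ is sufficiently close to $(x_*,u_*)$. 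Exact penalization then yields that $(x_*,u_*)$ is also a local minimum of radius $R(\cdot)$ for the problem obtained from $(P)$ by deleting $\Phi(x,u)\in\Omega$ and adding $K\int_{t_0}^{t_1}d_\Omega(\Phi(x(t),u(t)))\,dt$ to $J$, provided $K$ exceeds the Lipschitz modulus of $J$ multiplied by $\mu$.

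To this penalized problem, which has only pure control and endpoint constraints together with a standard dynamic, I would apply the nonsmooth maximum principle for local minima of radius $R(\cdot)$ in the spirit of \cite[Theorem 4.3]{cp}, whose mixed-constraint hypotheses are now vacuously satisfied. This delivers an arc $p$, a cost multiplier $\lambda_0\in\{0,1\}$, the stated transversality and Weierstrass conditions, and an Euler inclusion whose right-hand side contains $\partial^C\bigl(K\,d_\Omega\circ\Phi\bigr)(x_*(t),u_*(t))$. The Lipschitz chain rule, together with $\partial d_\Omega(z)\subseteq N_\Omega(z)\cap\bar B$, converts this into the convex hull over $\lambda\in N_\Omega(\Phi(x_*(t),u_*(t)))$ of $\partial\langle\lambda,\Phi\rangle(x_*(t),u_*(t))$, producing exactly (\ref{EulerNnew}).

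The nontriviality $(\lambda_0,p(t))\neq 0$ and, in the free-endpoint case, $\lambda_0=1$ are then extracted from the WBCQ: were $\lambda_0=0$ and $p$ to vanish on a set of positive measure, the Euler inclusion would force $(0,0)\in\partial\langle\lambda,\Phi\rangle(x_*(t),u_*(t))+\{0\}\times N^C_U(u_*(t))$ with $\lambda\in N_\Omega(\Phi(x_*(t),u_*(t)))$, and WBCQ (\ref{WBCQ1}), combined with the absolute continuity of $p$ and the adjoint ODE, would propagate a contradiction to the nontriviality already furnished by the maximum principle for the penalized problem. In my view the main technical obstacle is the measurable projection step in the exact-penalization argument: under the weaker local optimality notion (\ref{radiusf}) one must simultaneously preserve the pointwise bound $|u-u_*(t)|\le R(t)$ and the integrated bound on $\dot x-\dot x_*$ when projecting an arbitrary competitor onto $M(0)$, which requires pairing a Filippov-type measurable selection with the uniform calmness estimate and uniform Lipschitz properties coming from compactness of $\tilde C_*^{\varepsilon,R}$.
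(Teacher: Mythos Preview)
Your approach differs substantially from the paper's and contains a genuine gap.

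The paper does not redo the analysis of \cite[Theorem 4.2]{anli} under the new local-minimum concept. Instead it reduces Theorem~\ref{thm4.2new} to that result (stated here as Proposition~\ref{Propnew}) by a \emph{state-augmentation trick}: set $y_*(t)=\rho\int_{t_0}^t u_*(s)\,ds$ and append the dynamics $\dot y=\rho u$, obtaining an augmented problem $(P_\rho)$. Because $|\dot y-\dot y_*|=\rho|u-u_*|$, a local minimum of radius $R(\cdot)$ for $(P)$ in the sense of (\ref{radiusf}) becomes a $W^{1,1}$ local minimum of radius $\rho R(\cdot)$ for $(P_\rho)$ in the sense required by Proposition~\ref{Propnew}. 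One then applies Proposition~\ref{Propnew} as a black box; since $F,\phi,\Phi$ are independent of $y$, the $y$-adjoint vanishes identically, recovering the Euler inclusion and transversality of Theorem~\ref{thm4.2new}. The Weierstrass condition of Proposition~\ref{Propnew} reads $|(\phi(x_*,u)-\phi(x_*,u_*),\rho(u-u_*))|<\rho R(t)$, and choosing $\rho$ larger than the Lipschitz constant of $\phi$ in $u$ ensures this is implied by $|u-u_*(t)|<R(t)$. No exact-penalty or projection argument is used, and the WBCQ plays no role in nontriviality.

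The gap in your route is the projection step. Calmness of $M$ gives, for each $t$, a point $(\tilde x(t),\tilde u(t))\in M(0)$ near $(x(t),u(t))$; but $\tilde x(\cdot)$ is merely a measurable function of $t$, not an arc satisfying $\dot{\tilde x}=\phi(\tilde x,\tilde u)$, so $(\tilde x,\tilde u)$ is not an admissible competitor for $(P)$. To repair this you would need to project in $u$ alone for fixed $x$ and then re-integrate the ODE, but the calmness of $M$ controls $d((x,u),M(0))$, not the distance from $u$ to $\{u'\in U:\Phi(x,u')\in\Omega\}$ at a fixed $x$; nor does it guarantee that the re-integrated trajectory stays feasible. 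A Filippov-type argument along these lines would require additional structure you have not invoked. (Note also that, per the paper, the proof of \cite[Theorem 4.2]{anli} proceeds via a pseudo-Lipschitz differential inclusion rather than exact penalization, so the strategy you attribute to \cite{anli} is not the one actually used there.) The augmentation $\dot y=\rho u$ bypasses all of this by making the control appear as a velocity component, so the existing result applies directly.
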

{For the autonomous control problem $(P)$, the conclusions of Theorem \ref{thm4.2new} are  exactly the same as those in Clarke and {de} Pinho \cite[Theorem 4.3]{cp} except that the Weierstrass condition holds only on the open ball $B(u_*(t),R(t))$ instead of the closed ball $\bar B(u_*(t), R(t))$. However our assumption that the WBCQ plus the calmness condition is weaker than the calibrated constraint qualification in \cite[Theorem 4.3]{cp}, which is even stronger than the MFC. In fact, the Weierstrass conditions in \cite{cr,cp}  can  only hold on the open ball $B(u_*(t),R(t))$ instead of the closed ball $\bar B(u_*(t), R(t))$. This imprecision was spotted and remedied
 in \cite{b}. Moreover the authors in \cite{b} introduced a notion of {\it radius multifunction} and used it to consider a more general  concept of a local minimum and necessary optimality conditions.}

The Euler adjoint inclusion (\ref{EulerNnew}) in Theorem \ref{thm4.2new} is in an implicit form. In the case where $\Phi$ is smooth,
one can find a measurable  multiplier $\lambda(t)\in N_{\Omega}^{C}(\Phi(x_*(t),u_*(t))$ such that the Euler adjoint inclusion takes an explicit multiplier form by using the measurable selection theorem.

To give an estimate for the multiplier $\lambda$ we need to use the following result.
\begin{proposition}\cite[Proposition 4.1]{helmut3} \label{Prop3.2}Let $\Psi: \mathbb{R}^n \rightrightarrows \mathbb{R}^q$ be a set-valued map with closed graph. Given $(\bar x,\bar y)\in gph \Psi$, assume that $\Psi$ is  metrically subregular at $(\bar x,\bar y)$ with modulus $\kappa$. Then
$$N_{\Psi^{-1}(\bar y)}( \bar x)\subseteq \{ \gamma: \exists \lambda \in \kappa |\gamma| \bar{B}: (\gamma,\lambda)\in N_{gph \Psi} (\bar x,\bar y)\}.$$
\end{proposition}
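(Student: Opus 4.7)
The plan is to reduce the conclusion to the case where $\gamma$ is a Fr\'{e}chet normal, then to convert that Fr\'{e}chet-normal inequality into an approximate minimization property on $gph\,\Psi$ via the metric subregularity estimate, and finally to extract the multiplier $\lambda$ by applying Fermat's rule together with the limiting subdifferential sum rule.

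To reduce to the Fr\'{e}chet case: any $\gamma\in N_{\Psi^{-1}(\bar y)}(\bar x)$ is a limit of $\gamma_k\in \hat N_{\Psi^{-1}(\bar y)}(x_k)$ with $x_k\to \bar x$ and $x_k\in \Psi^{-1}(\bar y)$; because metric subregularity at $(\bar x,\bar y)$ persists on a full neighborhood with the same modulus $\kappa$, it is enough to produce, for each $k$, some $\lambda_k$ with $|\lambda_k|\leq \kappa|\gamma_k|$ and $(\gamma_k,\lambda_k)\in N_{gph\,\Psi}(x_k,\bar y)$. Uniform boundedness of $(\lambda_k)$ together with the outer semicontinuity of the limiting normal cone delivers the required $\lambda$ in the limit.

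For a fixed $\gamma\in \hat N_{\Psi^{-1}(\bar y)}(\bar x)$ and an arbitrary $\epsilon>0$, the heart of the argument is the estimate
\[
f(x,y):=-\langle \gamma,x-\bar x\rangle+\kappa|\gamma|\,|y-\bar y|\;\geq\;-\epsilon(\kappa+1)\,|(x,y)-(\bar x,\bar y)|
\]
for every $(x,y)\in gph\,\Psi$ sufficiently close to $(\bar x,\bar y)$. Its proof proceeds by picking $\tilde x\in \Psi^{-1}(\bar y)$ with $|\tilde x-x|=d(x,\Psi^{-1}(\bar y))\leq \kappa|y-\bar y|$ (using closedness of $gph\,\Psi$ and metric subregularity), splitting $\langle\gamma,x-\bar x\rangle=\langle\gamma,\tilde x-\bar x\rangle+\langle\gamma,x-\tilde x\rangle$, bounding the first summand by $\epsilon|\tilde x-\bar x|$ via the Fr\'{e}chet-normal inequality at $\bar x$, and bounding the second by $\kappa|\gamma|\,|y-\bar y|$ via Cauchy--Schwarz. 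The terms $\kappa|\gamma|\,|y-\bar y|$ cancel, and $|\tilde x-\bar x|\leq |x-\bar x|+\kappa|y-\bar y|$ yields the stated bound.

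This identifies $(\bar x,\bar y)$ as a local minimizer of the Lipschitz function $f+\epsilon(\kappa+1)\,|\cdot-(\bar x,\bar y)|$ over the closed set $gph\,\Psi$. Fermat's rule followed by the limiting sum rule for a Lipschitz function plus an indicator of a closed set yields
\[
0\in \partial f(\bar x,\bar y)+\epsilon(\kappa+1)\bar B+N_{gph\,\Psi}(\bar x,\bar y),
\]
and by the separable structure of $f$ one has $\partial f(\bar x,\bar y)=\{-\gamma\}\times \kappa|\gamma|\bar B$. Hence there exist $b\in \kappa|\gamma|\bar B$ and $(e_1,e_2)\to 0$ with $(\gamma-e_1,-b-e_2)\in N_{gph\,\Psi}(\bar x,\bar y)$. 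Letting $\epsilon\to 0$ along a subsequence and using compactness of $\kappa|\gamma|\bar B$ together with closedness of $N_{gph\,\Psi}(\bar x,\bar y)$ at the fixed point produces $\lambda:=-\lim b$ with $|\lambda|\leq \kappa|\gamma|$ and $(\gamma,\lambda)\in N_{gph\,\Psi}(\bar x,\bar y)$. I expect the bookkeeping in the key estimate to be the main obstacle, because one must invoke the defining Fr\'{e}chet-normal inequality at the projection $\tilde x$ rather than at $x$, and then re-express the bound in the product norm; the sum-rule and closure arguments that follow are standard tools from the preliminaries.
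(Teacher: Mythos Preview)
The paper does not give its own proof of this proposition; it is quoted verbatim from \cite[Proposition 4.1]{helmut3} and used as a tool in the proof of Theorem~\ref{New}. So there is nothing in the paper to compare against.

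Your argument is correct and follows the standard route for results of this type. A few minor remarks. First, the phrase ``metric subregularity persists on a full neighborhood with the same modulus'' is slightly loose: what you actually use is that the single estimate $d(x,\Psi^{-1}(\bar y))\leq \kappa\, d(\bar y,\Psi(x))$, valid on a fixed neighborhood of $\bar x$, continues to cover a neighborhood of each $x_k$ once $x_k$ is close enough to $\bar x$; this is exactly what the definition gives, so the reduction step goes through. Second, in the key estimate you should make explicit that $(x,y)\in gph\,\Psi$ gives $d(\bar y,\Psi(x))\leq|y-\bar y|$, which is how the bound $|\tilde x-x|\leq\kappa|y-\bar y|$ is obtained, and that $\tilde x\to\bar x$ as $(x,y)\to(\bar x,\bar y)$ so that the Fr\'{e}chet inequality is legitimately applied at $\tilde x$. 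Third, in the last limiting step you are using that the limiting normal cone map $(z,w)\mapsto N_{gph\,\Psi}(z,w)$ has closed graph; this is a standard fact and justifies both the passage $\epsilon\to0$ at $(\bar x,\bar y)$ and the passage $k\to\infty$ along $(x_k,\bar y)\to(\bar x,\bar y)$ in the reduction step.
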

We are now in a position to give the Euler adjoint inclusion an explicit multiplier form when $\Phi$ is smooth. Moreover in the case where $\Phi$ smooth and
{ $u_{*}(t)$ is in the interior of $U$ for almost all $t$,} we show that a multiplier can be chosen such that an estimate in terms of adjoint arc holds as in  \cite[Theorems 4.3]{cp}. Our result improves the corresponding result in \cite[Theorems 4.3]{cp} in that for the autonomous case, the estimate holds under the WBCQ plus the calmness condition which is weaker than the calibrated constraint qualification required in \cite[Theorems 4.3]{cp}.
\begin{thm}\label{New}  In additions to  the assumptions of Theorem \ref{thm4.2new},
 suppose that $\Phi$ is strictly differentiable. Then the Euler adjoint inclusion can be replaced by the one in the explicit multiplier form, i.e., there exists
 a { measurable}  function $\lambda:[t_0,t_1]\rightarrow \mathbb{R}^d$ with
$\lambda(t)\in N_{\Omega}^{C}(\Phi(x_*(t),u_*(t)))$ for almost every $t\in [t_0,t_1]$
satisfying
\begin{eqnarray}\label{222}
\lefteqn{(\dot{p}(t),0)\in \partial^C \{\langle -p(t), \phi\rangle+\lambda_0F\}
 ( x_*(t),u_*(t)) }\nonumber \\
&&
+ \nabla\Phi(x_*(t),u_*(t))^T\lambda(t) + \{0\}\times N^C_{U}(u_*(t)).
\end{eqnarray}
Moreover {if $N_U^C(u_*(t))=\{0\}$
  and $\Omega$ is normally regular,
 then  the multiplier $\lambda(t)$ }can be chosen such that  the following estimate holds:
\begin{equation}\label{3.9}
|{\lambda}(t)|\leq \kappa\{(k+k^{\phi})|p(t)|+\lambda_0k^{F}\}\qquad a.e.
\end{equation}
for some positive constants $k, \kappa, k^{\phi}, k^{F} $, where  $k^{\phi}$, $k^{F}$ are the Lipschitz coefficients of $\phi,F$ on set $D$ defined as in (\ref{setD}) respectively.
\end{thm}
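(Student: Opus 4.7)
I would split the proof into two parts: first converting the implicit Euler inclusion (\ref{EulerNnew}) into the explicit multiplier form (\ref{222}), then establishing the norm bound (\ref{3.9}). For the explicit form, my plan is to exploit the strict differentiability of $\Phi$, which reduces $\partial\langle\lambda,\Phi\rangle(x_*(t),u_*(t))$ to the singleton $\{\nabla\Phi(x_*(t),u_*(t))^T\lambda\}$. The convex-hull term in (\ref{EulerNnew}) then becomes $\nabla\Phi(x_*(t),u_*(t))^T\,{\rm co}\,N_\Omega(\Phi(x_*(t),u_*(t)))$, which sits inside $\nabla\Phi(x_*(t),u_*(t))^T N_\Omega^C(\Phi(x_*(t),u_*(t)))$. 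Hence for a.e.\ $t$ there exists a $\lambda(t)\in N_\Omega^C(\Phi(x_*(t),u_*(t)))$ realising (\ref{222}) pointwise. To secure measurability, I would apply a standard measurable selection theorem (Aumann, or Kuratowski--Ryll-Nardzewski) to the multifunction $t\rightrightarrows\{\lambda\in N_\Omega^C(\Phi(x_*(t),u_*(t))):(\ref{222})\text{ holds}\}$, whose graph is measurable from the regularity of $x_*,u_*,p,\dot p$ and the outer semicontinuity of the subdifferential maps involved.

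For the norm bound, the additional hypotheses $N^C_U(u_*(t))=\{0\}$ (which forces $N_U(u_*(t))=\{0\}$, as $N_U\subseteq N_U^C$) and normal regularity of $\Omega$ (giving $N_\Omega^C=N_\Omega$) reduce (\ref{222}) to the equality $(\dot p(t),0)=a(t)+\nabla\Phi(x_*(t),u_*(t))^T\lambda(t)$ with $a(t)\in\partial^C\{\langle -p,\phi\rangle+\lambda_0F\}(x_*(t),u_*(t))$ and $\lambda(t)\in N_\Omega(\Phi(x_*(t),u_*(t)))$. My strategy is to reselect a smaller-norm $\tilde\lambda(t)\in N_\Omega$ that gives the same $\nabla\Phi^T\lambda(t)$, by invoking Proposition \ref{Prop3.2}. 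Calmness of $M$ at $(0,x_*(t),u_*(t))$ is by definition metric subregularity of $\Psi(x,u):=\Phi(x,u)-\Omega$ (restricted by $u\in U$) at $((x_*(t),u_*(t)),0)$ with some modulus $\kappa(t)$, and smoothness of $\Phi$ together with trivial $N_U$ yields, via the pre-image chain rule, $N_{gph\Psi}(x_*,u_*,0)=\{(\nabla\Phi^T\lambda,-\lambda):\lambda\in N_\Omega\}$. Proposition \ref{Prop3.2} then gives $N_{M(0)}(x_*(t),u_*(t))\subseteq\{\nabla\Phi^T\lambda:\lambda\in N_\Omega,\ |\lambda|\le\kappa(t)|\nabla\Phi^T\lambda|\}$. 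Coupled with the forward inclusion $\nabla\Phi^T N_\Omega\subseteq N_{M(0)}$ (valid under smoothness), this supplies the desired $\tilde\lambda(t)\in N_\Omega$ with $\nabla\Phi^T\tilde\lambda(t)=\nabla\Phi^T\lambda(t)$ and $|\tilde\lambda(t)|\le\kappa(t)|\nabla\Phi^T\lambda(t)|$.

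The last step is to estimate $|\nabla\Phi^T\lambda(t)|$: Lipschitz bounds on the Clarke subdifferential give $|a(t)|\le k^\phi|p(t)|+\lambda_0 k^F$, hence $|\nabla\Phi^T\lambda(t)|\le|\dot p(t)|+k^\phi|p(t)|+\lambda_0 k^F$. Using the $x$-component $\dot p(t)=a_x(t)+\nabla_x\Phi^T\tilde\lambda(t)$ and the bound $\|\nabla\Phi\|\le k$ on the relevant tube $D$, a short bootstrap would eliminate $|\dot p(t)|$ and deliver (\ref{3.9}); measurability of $\tilde\lambda(\cdot)$ follows once more by measurable selection. The hardest part will be closing the bootstrap cleanly while preserving measurability at each re-selection step; conceptually, the key novelty is that Proposition \ref{Prop3.2} substitutes here for the CCQ used in \cite[Theorem 4.3]{cp}, which is precisely what enables the weaker constraint qualification.
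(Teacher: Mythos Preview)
Your treatment of the explicit multiplier form and your use of Proposition~\ref{Prop3.2} to reselect a bounded $\tilde\lambda(t)$ with $\nabla\Phi^T\tilde\lambda(t)=\nabla\Phi^T\lambda(t)$ track the paper's argument closely. The genuine gap is in your ``short bootstrap'' to eliminate $|\dot p(t)|$. Substituting $|\dot p(t)|\le |a_x(t)|+\|\nabla_x\Phi\|\,|\tilde\lambda(t)|$ into $|\tilde\lambda(t)|\le\kappa\bigl(|\dot p(t)|+|a(t)|\bigr)$ yields
\[
\bigl(1-\kappa\,\|\nabla_x\Phi\|\bigr)\,|\tilde\lambda(t)|\;\le\;2\kappa\,|a(t)|,
\]
which only closes when $\kappa\,\|\nabla_x\Phi\|<1$. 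No such smallness assumption is available, so identifying $k$ with a uniform bound on $\|\nabla\Phi\|$ does not deliver~(\ref{3.9}).

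The paper avoids this circularity altogether: it invokes the \emph{a priori} estimate $|\dot p(t)|\le k\,|p(t)|$, where $k$ is the pseudo-Lipschitz modulus of the set-valued right-hand side in the differential-inclusion reformulation underlying Proposition~\ref{Propnew} (i.e., \cite[Theorem~4.2]{anli}). This bound is a standard by-product of the Euler--Lagrange conditions for differential inclusions and is independent of any multiplier. With it, and with $|a(t)|\le k^{\phi}|p(t)|+\lambda_0 k^{F}$, one gets directly
\[
|\tilde\lambda(t)|\le \kappa\,|(\dot p(t),0)-a(t)|\le \kappa\bigl(k|p(t)|+k^{\phi}|p(t)|+\lambda_0 k^{F}\bigr),
\]
which is exactly~(\ref{3.9}). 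A secondary point you should also address: you work with a $t$-dependent modulus $\kappa(t)$, whereas~(\ref{3.9}) requires a single constant; the paper secures this by using compactness of the set $D$ in~(\ref{setD}) to obtain a uniform subregularity modulus along the optimal trajectory.
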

\begin{proof} By \cite[Theorem 14.26]{rw}, one can easily get the measurability of the mapping $\lambda: t\rightarrow N_{\Omega}^{C}(\Phi(x_*(t),u_*(t)))$.
The Euler adjoint inclusion in the explicit multiplier form can be  easily verified in  (\ref{EulerNnew}) when $\Phi$ is  strictly differentiable.

We now prove the estimate for $\lambda(t)$ in (\ref{3.9}). Since the set-valued map $M$ is calm at $(0,x_*(t),u_*(t))$, it is equivalent to saying that the set-valued map $M^{-1}(x,u):=\Phi(x,u)-\Omega$ is metrically subregular at $(x_*(t),u_*(t),0)$.
Since the set
\begin{equation}\label{setD}
D:=cl\left \{\cup_{t\in[t_0,t_1]} (x_*(t),u_*(t)) \right\}    
\end{equation}  is compact, one can find a constant $\kappa>0$  such that
 the set-valued map $M^{-1}:=\Phi(x,u)-\Omega$ is  metrically subregular at $(x_*(t),u_*(t) ,0)$ for all $(x_*(t),u_*(t)\textcolor{red}{)} \in D$ with the same modulus $\kappa>0$.
We get  by Proposition \ref{Prop3.2} that
\begin{eqnarray*}
\lefteqn{ N_{M(0)}(x_*(t),u_*(t)) }\\
&& \subseteq \{(\alpha,\beta): \exists  -\lambda \in \kappa |(\alpha,\beta)|\bar B\mbox{ s.t. } (\alpha,\beta, -\lambda)\in  N_{gphM^{-1}}((x_*(t),u_*(t),0))\}.
\end{eqnarray*}
Since $gphM^{-1}=\{(x,u,\nu): \nu\in \Phi(x,u)-\Omega\}=\{(x,u,\nu): \Phi(x,u)-\nu\in \Omega\}$,  it follows from \cite[Exercise 6.7]{rw} that
\begin{eqnarray*}
\lefteqn{N_{gphM^{-1}}(x_*(t),u_*(t),0)=}\\
&& \{(\alpha,\beta,-\lambda): (\alpha,\beta)=\nabla\Phi(x_*(t),u_*(t))^{T}\lambda, \lambda\in N_{\Omega}(\Phi(x_*(t),u_*(t))) \}.\end{eqnarray*}
Therefore
\begin{eqnarray}\label{333}
\lefteqn{ N_{M(0)}(x_*(t),u_*(t)) }\nonumber\\
&&\subseteq\{(\alpha,\beta):  \exists \tilde{\lambda} \in \kappa |(\alpha,\beta)|\bar B\cap N_{\Omega}(\Phi(x_*(t),u_*(t))),(\alpha,\beta)=\nabla\Phi(x_*(t),u_*(t))^{T} \tilde{\lambda}\}.\nonumber \\
&& \qquad \qquad \qquad
\end{eqnarray}


Since  the proof of Theorem \ref{thm4.2new} is based on  Proposition \ref{Propnew} which is \cite[Theorem 4.2]{anli} whose proof is based on transforming the optimal control problem to a differential inclusion problem with a pseudo-Lipschitz set-valued {map}, we can obtain that $|\dot{p}(t)|\leq k |p(t)|$  where  constant $k>0$ is the pseudo-Lipschitz module of the set-valued map.
Moreover since $\Omega$ is normally regular, the limiting normal cone coincides with the Clarke  normal cone to $\Omega$. Hence from the proof of \cite[Theorems 4.1 and 4.2]{anli}, if we use the estimate in  (\ref{333}) to replace the estimate for $N_{M(0)}(x_*(t),u_*(t))$, then   for almost every  $t$, we can find $\tilde{\lambda}(t) \in \kappa |\nabla\Phi(x_*(t),u_*(t))^{T}\tilde{\lambda}(t)|\bar B\cap
N_{\Omega}(x_*(t),u_*(t))$  satisfying the Euler's inclusion:
$$(\dot{p}(t),0)\in \partial^C \{\langle -p(t), \phi\rangle+\lambda_0F\}
 ( x_*(t),u_*(t))
+ \nabla\Phi(x_*(t),u_*(t))^T\tilde\lambda(t).$$ From this Euler's inclusion,  we may choose   $$(\zeta(t),\eta(t))\in \partial^C\{\langle -p(t),\phi\rangle+\lambda_0F\}(x_*(t),u_*(t))$$ satisfying $(\dot{p}(t),0)-(\zeta(t),\eta(t))= \nabla\Phi(x_*(t),u_*(t))^T \tilde \lambda(t)$.
In view of the Lipschitz assumption on $\phi,F$ and the compactness of set $D$, we get that  $|(\zeta(t),\eta(t))|\leq k^{\phi}|p(t)|+\lambda_0k^{F}$, where $k^{\phi}$, $k^{F}$ are the Lipschitz coefficients of $\phi,F$ with respected to $(x,u)$ on set $D$ respectively. It follows that
\begin{eqnarray*}
|\tilde\lambda(t)|
& \leq & \kappa  |\nabla \Phi(x_*(t),u_*(t))^T \tilde\lambda(t)|\\
& \leq & \kappa |(\dot{p}(t),0)-(\zeta(t),\eta(t))|  \\
&\leq & \kappa k |p(t)|+
\kappa(k^{\phi}|p(t)|+\lambda_0k^{F})\leq \kappa\{(k+k^{\phi})|p(t)|+\lambda_0k^{F}\}, \, \mbox{ a.e. }
\end{eqnarray*}
\end{proof}

The constraint qualification imposed in Theorem \ref{thm4.2new} is required to hold for points in a neighborhood of the optimal process $(x_*,u_*)$.
It is natural to ask whether this condition can be imposed only along the optimal process $(x_*,u_*)$.
In order to answer this question we first introduce the following concept.
\begin{defn} \cite[Definition 4.7]{cp}
We say that $(t,x_*(t),u)$ is an admissible cluster point of $(x_*,u_*)$ if there exists a sequence $t_i\in [t_0,t_1]$ converging to $t$ and  $\Phi(x_i,u_i)\in  \Omega$, $u_i\in U$ such that $\lim x_i=x_*(t)$ and $\lim u_i=\lim u_*(t_i)=u$.
\end{defn}
We now derive a similar result as Clarke and  {de} Pinho \cite[Theorem 4.8]{cp}  under  the WBCQ plus the calmness of $M$ which is weaker than MFC required by \cite[Theorem 4.8]{cp}. Note that in the case where $u_*(t)$ is continuous, the only admissible cluster point of $(x_*,u_*)$ is $(t, x_*(t),u_*(t))$ and hence the constraint qualification is only needed to be verified along the optimal process $(x_*,u_*)$.

\begin{thm}\label{thm4.3} Let  $(x_*,u_*)$ be a  local minimum of constant radius $R$ for $(P)$.
Suppose that  the optimal control $u_{*}$ is bounded.
Assume that for every $(x_*(t),u)$ such that $(t,x_*(t),u)$ is an admissible cluster point of $(x_*,u_*)$,  the WBCQ holds:
\begin{eqnarray*}
\left \{ \begin{array}{l}  (\alpha,0)\in \partial\langle \lambda, \Phi\rangle(x_*(t),u)+\{0\}\times N_{U}(u),\\
\lambda\in N_{\Omega}(\Phi(x_*(t),u))\end{array} \right. \Longrightarrow \alpha=0
\end{eqnarray*}
and the map $M$ defined as in (\ref{perturb3.1}) is calm at $(0, x_*(t),u)$.
Then the necessary optimality conditions of Theorem \ref{thm4.2new} hold as stated with some radius $\eta \in (0,R)$: for some $\eta  \in (0,R)$, for $t$ $ a.e.$,
\begin{eqnarray*}
&&\Phi(x_*(t),u)\in \Omega, u\in U, \, \,  |u-u_*(t)|< \eta\Longrightarrow\\
&&\langle p(t),\phi(x_*(t),u) \rangle-\lambda_{0}F(x_*(t),u)\leq \langle p(t), \phi (x_*(t),u_*(t)) \rangle -\lambda_{0}F(x_*(t),u_*(t)).
\end{eqnarray*}
Moreover if $u_*(\cdot)$ is continuous, then the WBCQ and the calmness condition are only required to hold along $(x_*(t),u_*(t)).$

Moreover if  $\Phi$ is strictly differentiable,
then the Euler adjoint inclusion can be replaced by the one in the explicit multiplier form {(\ref{222})} and if $N_U^C(u_*(t))=\{0\}$  and $\Omega$ is normally regular, then  the estimate for the  multiplier $\lambda(t)$ in {(\ref{3.9})} also holds.
\end{thm}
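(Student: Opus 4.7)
The plan is to reduce Theorem \ref{thm4.3} to Theorem \ref{thm4.2new} by finding a smaller radius $\eta\in(0,R)$ and an $\varepsilon>0$ such that the WBCQ and calmness of $M$ are verified at \emph{every} point of the tube $\tilde{C}_*^{\varepsilon,\eta}$. Once this is established, Theorem \ref{thm4.2new} (applied with constant radius $\eta$) delivers the nontriviality, transversality, Euler adjoint inclusion, and the Weierstrass condition on the open ball $B(u_*(t),\eta)$; Theorem \ref{New} then upgrades the Euler inclusion to the explicit multiplier form and furnishes the bound \eqref{3.9} under the additional regularity hypotheses.

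First I would verify that the set of admissible cluster points $\mathcal{A}:=\{(t,x_*(t),u):(t,x_*(t),u)\text{ is an admissible cluster point of }(x_*,u_*)\}$ is compact. Boundedness of $\mathcal{A}$ follows from $t\in[t_0,t_1]$, continuity of $x_*$, and the hypothesis that $u_*$ is bounded. Closedness follows from the definition of cluster points (a diagonal extraction shows the set is sequentially closed). Compactness of $\tilde{C}_*^{\varepsilon,\eta}$ itself will follow from $\eta<\infty$ together with the closedness of $U$ and $\Omega$.

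Next comes the crux of the proof: a contradiction argument showing that for $\varepsilon>0$ and $\eta\in(0,R)$ small enough, WBCQ and the calmness of $M$ hold at every point of $\tilde{C}_*^{\varepsilon,\eta}$. If not, there would exist sequences $\varepsilon_n\downarrow 0$, $\eta_n\downarrow 0$ and $(t_n,x_n,u_n)\in\tilde{C}_*^{\varepsilon_n,\eta_n}$ at which either WBCQ or the calmness of $M$ fails. Using boundedness of $u_*$, a subsequence yields $t_n\to t$, $u_*(t_n)\to u$, whence $u_n\to u$ and $x_n\to x_*(t)$, with $\Phi(x_n,u_n)\in\Omega$, $u_n\in U$; thus $(t,x_*(t),u)\in\mathcal{A}$. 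To close the contradiction I would use two stability facts. For WBCQ: if the condition failed at $(x_n,u_n)$ there would exist $(\alpha_n,\lambda_n)$ with $\alpha_n\neq0$ satisfying the defining inclusion; normalizing by $|\alpha_n|=1$ and invoking Proposition \ref{Prop3.2} together with the calmness modulus at the cluster point (which via metric subregularity of $M^{-1}$ bounds $\lambda_n$ in terms of $|\alpha_n|$), the sequence $(\alpha_n,\lambda_n)$ stays bounded; outer semicontinuity of the limiting normal cone $N_\Omega$ and of the limiting subdifferential $\partial\langle\cdot,\Phi\rangle$ passes the witness to the limit, contradicting WBCQ at $(x_*(t),u)$. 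For calmness: using equivalence with metric subregularity of $M^{-1}$ and a compactness/uniform-modulus argument over $\mathcal{A}$, calmness at each cluster point extends to a neighborhood of $\mathcal{A}$, and hence to $\tilde{C}_*^{\varepsilon,\eta}$ for small $\varepsilon,\eta$.

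With the WBCQ and calmness transferred to the tube, I would apply Theorem \ref{thm4.2new} with radius $\eta$ to obtain the arc $p$, scalar $\lambda_0$, and all four conclusions of the theorem, with the Weierstrass condition valid on $B(u_*(t),\eta)$. The assertion for the continuous case follows because continuity of $u_*$ forces any cluster $(t,x_*(t),u)$ to satisfy $u=u_*(t)$, so only the process itself must be checked. Finally, the explicit multiplier form and the estimate \eqref{3.9} follow from Theorem \ref{New} applied on $\tilde{C}_*^{\varepsilon,\eta}$. I expect the main obstacle to be the stability step for calmness, since calmness is a quantitative local property whose modulus need not be upper semicontinuous in the base point; the compactness of $\mathcal{A}$ and the uniform form of Proposition \ref{Prop3.2} are what make a uniform modulus available on a neighborhood of $\mathcal{A}$.
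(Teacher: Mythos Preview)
Your approach differs from the paper's. The paper does \emph{not} reduce Theorem~\ref{thm4.3} to Theorem~\ref{thm4.2new}; instead it reruns the augmentation used in the Appendix proof of Theorem~\ref{thm4.2new} (introducing $y_*(t)=\rho\int_{t_0}^t u_*(s)\,ds$ so that $\dot y=\rho u$ and the $(P)$-notion of radius-$R$ minimum becomes the $W^{1,1}$ radius-$R_\rho$ minimum for the enlarged problem $(P_\rho)$), and at the key step replaces Proposition~\ref{Propnew} (\cite[Theorem~4.2]{anli}) by Proposition~\ref{Propnew1} (\cite[Theorem~4.3]{anli}), which already has the admissible-cluster-point form of the constraint qualification built in. The explicit multiplier and estimate then follow from Theorem~\ref{New}. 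Thus the paper outsources the ``cluster-point to tube'' passage to \cite{anli}, where it is carried out at the level of the differential-inclusion reformulation rather than by verifying the hypotheses of Theorem~\ref{thm4.2new} on a shrunken tube.

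Your reduction to Theorem~\ref{thm4.2new} would require showing that WBCQ together with calmness at admissible cluster points propagates to all of $\tilde C_*^{\varepsilon,\eta}$ for small $\varepsilon,\eta$. The calmness part is indeed fine: metric subregularity of $M^{-1}$ at $(x_*(t),u,0)$ is precisely a local error bound on a neighborhood of $(x_*(t),u)$, so it automatically holds (with the same modulus) at every nearby feasible $(x_n,u_n)$. The gap is in the WBCQ stability step. Normalizing $|\alpha_n|=1$ does not bound the witness $\lambda_n$: if $|\lambda_n|\to\infty$, dividing by $|\lambda_n|$ and passing to the limit produces $\hat\lambda\in N_\Omega(\Phi(x_*(t),u))$ with $|\hat\lambda|=1$ and $(0,0)\in\partial\langle\hat\lambda,\Phi\rangle(x_*(t),u)+\{0\}\times N_U(u)$, i.e.\ a failure of NNAMCQ at the cluster point. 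But the paper explicitly records that WBCQ + calmness does \emph{not} imply NNAMCQ (see the discussion after Definition~\ref{defn2.5} and \cite[Example~2.1]{anli}), so this gives no contradiction. Your appeal to Proposition~\ref{Prop3.2} does not close the gap either: that proposition takes $\gamma\in N_{M(0)}(x_n,u_n)$ and produces \emph{some} bounded multiplier $\tilde\lambda$ with $((\gamma),-\tilde\lambda)\in N_{\mathrm{gph}\,M^{-1}}$, not a bound on the \emph{given} $\lambda_n$; moreover you would first need $(\alpha_n,0)\in N_{M(0)}(x_n,u_n)$ and then to extract from $N_{\mathrm{gph}\,M^{-1}}$ the WBCQ-failure relation for $\tilde\lambda$, which in the nonsmooth setting (with the abstract constraint $u\in U$) is not available without additional regularity. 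In short, the tube-shrinking argument that works for MFC in \cite[Theorem~4.8]{cp} does not transfer to WBCQ + calmness, and this is exactly why the paper proceeds via Proposition~\ref{Propnew1} instead.
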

The proof of Theorem \ref{thm4.3} uses the following result.
\begin{proposition}\cite[Theorem 4.3]{anli}\label{Propnew1}
 Let  $(x_*,u_*)$ be a $W^{1,1}$ local minimum of constant radius $R$ for $(P)$.  Suppose that there exists $\delta>0$ such that $R(t)\geq \delta$.
 Moreover suppose that  for  all  $(x_*(t),u)$ such that  $(t,x_*(t), \phi(x_*(t),u))$ is an admissible cluster point of $x_*$ in the sense of \cite[Definition 4.1]{anli},  the WBCQ holds:
\begin{eqnarray*}
\left \{ \begin{array}{l}  (\alpha,0)\in \partial\langle \lambda, \Phi\rangle(x_*(t),u)+\{0\}\times N_{U}(u),\\
\lambda\in N_{\Omega}(\Phi(x_*(t),u))\end{array} \right. \Longrightarrow \alpha=0
\end{eqnarray*}
and the mapping $M$ defined as in (\ref{perturb3.1}) is calm at $(0, x_*(t),u)$. Then the necessary optimality conditions of Proposition \ref{Propnew} holds as stated with some radius $\eta\in (0,R)$. Moreover if $\dot{x}_*(\cdot)$ is continuous, then the WBCQ and the calmness condition are only required to hold along $(x_*(t),u_*(t)).$
\end{proposition}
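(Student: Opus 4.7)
The plan is to deduce Theorem \ref{thm4.3} from Theorem \ref{thm4.2new} via a reduction argument: the pointwise assumption (WBCQ + calmness only at admissible cluster points), together with the boundedness of $u_*$, will be shown to force WBCQ + calmness to hold on a full compact tube $\tilde{C}_*^{\varepsilon,\eta}$ once the radius is shrunk to some $\eta\in(0,R)$. After that, Theorem \ref{thm4.2new} applies verbatim with Weierstrass radius $\eta$, and Theorem \ref{New} gives the explicit multiplier form and the estimate \eqref{3.9} under the additional smoothness/regularity hypotheses.

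First, I would establish compactness. Since $u_*$ is bounded in $L^\infty$ and $x_*$ is continuous on $[t_0,t_1]$, for any $\varepsilon>0$ and $\eta\in(0,R]$ the set $\tilde{C}_*^{\varepsilon,\eta}$ is a closed subset of a bounded Euclidean box and hence compact. The boundedness of $u_*$ is precisely what is needed to ensure this, since $U$ itself need not be compact.

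Second, I would argue by contradiction that there exist $\varepsilon>0$ and $\eta\in(0,R)$ such that WBCQ and calmness of $M$ hold at every $(t,x,u)\in\tilde{C}_*^{\varepsilon,\eta}$. Assume otherwise; then one finds sequences $\varepsilon_n\downarrow 0$, $\eta_n\downarrow 0$ and points $(t_n,x_n,u_n)\in \tilde{C}_*^{\varepsilon_n,\eta_n}$ at which either WBCQ or calmness fails. By compactness, a subsequence converges to $(t^*,x^*,u^*)$. Continuity of $x_*$ with $|x_n-x_*(t_n)|\le\varepsilon_n$ gives $x^*=x_*(t^*)$; exploiting that each $(t_n,x_n,u_n)$ lies in the closure defining $\tilde{C}_*^{\varepsilon_n,\eta_n}$, a diagonal approximation yields $(t_n',x_n',u_n')$ with $\Phi(x_n',u_n')\in\Omega$, $u_n'\in U$, $t_n'\to t^*$, $x_n'\to x_*(t^*)$, $u_n'\to u^*$, and (after a further subsequence) $u_*(t_n')\to u^*$ using $|u_n-u_*(t_n)|\le\eta_n\to 0$. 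Thus $(t^*,x_*(t^*),u^*)$ is an admissible cluster point, where by hypothesis WBCQ and calmness both hold.

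The key obstacle is the stability step: deriving a contradiction between the failure at $(t_n,x_n,u_n)$ and the validity at the limit $(t^*,x_*(t^*),u^*)$. For WBCQ one invokes outer semicontinuity of the limiting normal cone $N_\Omega(\cdot)$ and of the limiting subdifferential $\partial\langle\lambda,\Phi\rangle$, together with a renormalization of multipliers to handle unbounded sequences; such a normalization-and-passage-to-the-limit argument transfers the failing nonzero normal to a limiting nonzero normal at $(x_*(t^*),u^*)$, contradicting the assumed WBCQ. For calmness, one follows the line of reasoning in Clarke--de Pinho \cite[Theorem 4.8]{cp}: the modulus of calmness can be chosen uniformly on the compact set of cluster points, and any sequence of points where calmness degenerates would, after extraction, violate calmness at the cluster-point limit. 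This is the delicate part of the proof, since calmness is not, in general, an open property; compactness together with the local error-bound characterization is what makes the argument work.

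Finally, with $\varepsilon$ and $\eta\in(0,R)$ so chosen, Theorem \ref{thm4.2new} applies on $\tilde{C}_*^{\varepsilon,\eta}$ to produce $\lambda_0\in\{0,1\}$, an arc $p$, the nontriviality and transversality conditions, the Euler inclusion \eqref{EulerNnew}, and the Weierstrass condition with radius $\eta$. When $u_*(\cdot)$ is continuous, the only admissible cluster point at each $t$ is $(t,x_*(t),u_*(t))$, so WBCQ + calmness are only required along $(x_*(t),u_*(t))$. Under strict differentiability of $\Phi$, Theorem \ref{New} immediately upgrades \eqref{EulerNnew} to the explicit multiplier form \eqref{222}, and under the additional hypotheses $N_U^C(u_*(t))=\{0\}$ and normal regularity of $\Omega$, the same theorem delivers the multiplier estimate \eqref{3.9} since these properties are verified on the same compact set $D$ as in \eqref{setD}.
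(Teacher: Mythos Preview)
There is a basic mismatch: the statement you were asked to prove is Proposition~\ref{Propnew1}, which in the paper is \emph{quoted} from \cite[Theorem~4.3]{anli} and is not proved here at all. Your proposal is instead an argument for Theorem~\ref{thm4.3}. In the paper, Theorem~\ref{thm4.3} is obtained by repeating the Appendix proof of Theorem~\ref{thm4.2new} (the augmentation $\dot y=\rho u$ and reduction to the $W^{1,1}$ setting), but invoking Proposition~\ref{Propnew1} in place of Proposition~\ref{Propnew}. So the paper never attempts to propagate WBCQ~$+$~calmness from admissible cluster points to a whole tube $\tilde C_*^{\varepsilon,\eta}$; it simply imports the cluster-point version of the necessary conditions from \cite{anli}.

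Even as a proof of Theorem~\ref{thm4.3}, your route has a genuine gap at the ``stability'' step for WBCQ. Your contradiction argument produces $(x_n,u_n)\to(x_*(t^*),u^*)$ with witnesses $\lambda_n\in N_\Omega(\Phi(x_n,u_n))$ and $\alpha_n\neq 0$ such that $(\alpha_n,0)\in\partial\langle\lambda_n,\Phi\rangle(x_n,u_n)+\{0\}\times N_U(u_n)$. After any normalization and passage to the limit via outer semicontinuity you obtain $(\tilde\alpha,0)\in\partial\langle\tilde\lambda,\Phi\rangle(x_*(t^*),u^*)+\{0\}\times N_U(u^*)$ with $\tilde\lambda\in N_\Omega(\Phi(x_*(t^*),u^*))$, but nothing prevents $\tilde\alpha=0$; in that case WBCQ at the cluster point is \emph{not} violated and no contradiction results. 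This is precisely the difference between WBCQ and MFC: the Clarke--de~Pinho argument in \cite[Theorem~4.8]{cp} works because MFC forces $\lambda=0$, so a normalized nonzero $\lambda_n$ passes to a nonzero limit and yields a contradiction; WBCQ only forces $\alpha=0$ and carries no such robustness. Your remark that ``calmness is not an open property'' is actually the benign part (a local error bound at the cluster point immediately covers nearby feasible points with the same modulus); the fragile part is WBCQ, and the sketch does not close it. This is exactly why the paper takes the indirect route through Proposition~\ref{Propnew1} rather than trying to verify the hypotheses of Theorem~\ref{thm4.2new} on a shrunken tube.
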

{\bf Proof of Theorem \ref{thm4.3}}.
The proof is similar to the one in Theorem \ref{thm4.2new}. The only difference is that instead of using Proposition \ref{Propnew}, we use Proposition \ref{Propnew1}. The last statement of Theorem \ref{thm4.3} follows from Theorem \ref{New}.   \hfill $\Box$

\section{Optimal control problems with implicit control systems}
The main purpose of this section is to derive necessary optimality conditions for problem $(P_{ICS})$.
As commented in Section 1,
 we can transform $(P_{ICS})$ into the equivalent problem $(P_{ECS})$ by introducing a vector variable $v(t):=\dot{x}(t)$.
The problem $(P_{ECS})$ is a special case of problem $(P)$ studied in Section 3 with $\phi:=v$.
Unless otherwise specified, in this section we assume that $F, f, \varphi$ are locally Lipschitz continuous, and the sets
$U, K_\varphi$, $S$ are  closed.
It is easy to check that the concept of a local minimum of radius $R(\cdot)$ for the implicit control problem $(P_{ICS})$ defined as in the introduction coincides with the definition of a local minimum of radius $R(\cdot)$ for problem $(P)$.
Define
\begin{eqnarray}
&&{S}_\varphi^{\varepsilon,R}(t):=\{(x,u,v)\in {M}_\varphi(0): |x-x_*(t)|\leq \varepsilon, |(u,v)-(u_{*}(t),\dot{x}_{*}(t))|\leq R(t)\},\nonumber \\
&&{C}_\varphi^{\varepsilon,R}:=cl\{(t,x,u,v)\in [t_0,t_1]\times \mathbb{R}^{n_{x}}\times \mathbb{R}^{n_{u}}\times \mathbb{R}^{n_{x}}:  (x,u,v)\in {S}_\varphi^{\varepsilon,R}(t)\} \label{C},
\end{eqnarray}
{where the set-valued map  ${M}_\varphi(\Theta)$ is defined as in  (\ref{perturb4.1n}).}
With these identifications, the following results follow immediately from Theorems \ref{thm4.2new}, \ref{New}, \ref{thm4.3} and the calculus rule for normal cones.
\begin{thm} \label{thm4.1} Let  $(x_*,u_*)$ be a  local minimum of radius $R(\cdot)$ for $(P_{ICS})$.
  Suppose that  there exists $\delta>0$ such that $R(t)\geq \delta$.
 Suppose further that ${C}_\varphi^{\varepsilon, R}$ is compact and for
 all $(t,x,u,v) \in {C}_\varphi^{\varepsilon, R}$  the WBCQ holds:
\begin{eqnarray*}
\left \{ \begin{array}{l}  (\alpha,0,0)\in \partial \langle \lambda_{\varphi}, \varphi\rangle(x,u,v)+\{0\}\times N_U(u)\times \{0\},
\\
\lambda_{\varphi}\in N_{K_{\varphi}}(\varphi(x,u,v))
\end{array} \right. \Longrightarrow \alpha=0
\end{eqnarray*}
and the mapping ${M_\varphi}$ defined as in (\ref{perturb4.1n}) is calm at $(0, x,u,v)$.
Then there exist an arc $p$ and   a number $\lambda _{0}$ in $\{0,1\}$,
satisfying the nontriviality condition
$(\lambda _{0},p(t))\neq0, \forall t\in[t_0,t_1]$,
 the transversality condition
$$(p(t_0),-p(t_1)) \in \lambda_0 \partial f(x_*(t_0),x_*(t_1))+N_S (x_*(t_0),x_*(t_1)),$$
and
the Euler adjoint inclusion for almost every $t$:
\begin{eqnarray}
\lefteqn{(\dot{p}(t),-\mu(t), p(t)) \in
\lambda_0\partial^CF( x_*(t),u_*(t),\dot{x}_*(t))}\nonumber \\
 &&+co\{ \partial \langle \lambda_{\varphi},\varphi\rangle( x_*(t),u_*(t),\dot{x}_*(t))
 :\lambda_{\varphi}\in N_{K_{\varphi}} (\varphi(x,u,v))
\}, \nonumber
 \label{41}
\end{eqnarray}
where $\mu(\cdot)$ is a measurable function satisfying $\mu(t)\in N_{U}^{C}(u_*(t))$ a.e.,
as well as the Weierstrass condition of radius $R(\cdot)$ for almost every $t$:
\begin{eqnarray*}
\lefteqn{ (x_*(t),u,v)\in {M_\varphi}(0), |(u,v)-(u_*(t),\dot{x}_*(t))| < R(t)\Longrightarrow }\\
&& \langle p(t),v\rangle-  \lambda_{0}F(x_*(t),u,v)\leq  \langle p(t),\dot{x}_*(t) \rangle -\lambda_0F(x_*(t),u_*(t),\dot{x}_*(t)).
\end{eqnarray*}
Moreover if either $K_\varphi
\subseteq \mathbb{R}^{m}_{-}$ or $\varphi$ is strictly differentiable,
then the Euler adjoint inclusion can be replaced by the one in the explicit multiplier form, i.e., there exists
measurable functions $\lambda_{\varphi}:[t_0,t_1]\rightarrow \mathbb{R}^m_{+}$,
$\mu:[t_0,t_1]\rightarrow \mathbb{R}^{n_{u}}$ with
$\lambda_{\varphi}(t)\in N_{K_{\varphi}}^{C}(\varphi(x_*(t),u_*(t),\dot{x}_*(t)))$,
 $\mu(t)\in N_{U}^{C}(u_*(t))$ a.e.
satisfying
$$
(\dot{p}(t),-\mu(t), p(t)) \in \lambda_0\partial^CF( x_*(t),u_*(t),\dot{x}_*(t))+\partial^C \varphi(x_*(t),u_*(t),\dot{x}_*(t))^T\lambda_{\varphi}(t)\mbox { a.e.}.
$$
If $N_U^C(u_*(t))=\{0\}$,  { $K_{\varphi}$ is normally regular} and $\varphi$ is  strictly differentiable, then  the estimate for the  multiplier $\lambda_{\varphi}(t)$ in (\ref{3.9}) also holds, namely,
$$
|\lambda_{\varphi}(t)|\leq \kappa\{k|p(t)|+\lambda_0k^{F}\}\qquad a.e.
$$
for some positive constants $k, \kappa, k^{F} $, where   $k^{F}$ is the Lipschitz coefficients of $F$ on set $D$ defined as in (\ref{setD}) respectively.
Moreover if $u_*(\cdot)$ is continuous, then the WBCQ and the calmness condition are only required to hold along $(x_*(t),u_*(t))$.
In the case of free end point, $\lambda_0$ can be taken as $1$.
\end{thm}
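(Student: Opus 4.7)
The plan is to apply Theorems \ref{thm4.2new}, \ref{New}, and \ref{thm4.3} directly to the reformulated problem $(P_{ECS})$ and then read off the conclusions in the variables of $(P_{ICS})$. I would identify $(P_{ECS})$ with an instance of the Section~3 problem $(P)$ by taking the state to be $x$, the enlarged control to be $w:=(u,v)\in U\times\mathbb{R}^{n_x}$, the dynamics to be $\phi(x,w):=v$ (a smooth linear map), the mixed constraint mapping to be $\Phi(x,w):=\varphi(x,u,v)$ with constraint set $\Omega:=K_\varphi$, and the control set to be $U\times\mathbb{R}^{n_x}$. Since $N_{U\times\mathbb{R}^{n_x}}(u,v)=N_U(u)\times\{0\}$ by the product calculus rule, the WBCQ for $(P)$ in the $(x,u,v)$ variables collapses to the stated WBCQ, the calmness of $M$ is precisely the calmness of $M_\varphi$, and $\tilde C^{\varepsilon,R}_*$ matches $C^{\varepsilon,R}_\varphi$; the notion of local minimum of radius $R(\cdot)$ is identical under both formulations.

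Next, I would unfold the Euler adjoint inclusion of Theorem \ref{thm4.2new} componentwise. Since $\phi(x,u,v)=v$ is linear, $\partial^C\langle -p,\phi\rangle(x,u,v)$ is the singleton $\{(0,0,-p)\}$. The $x$-slot of the inclusion then yields $\dot p\in\lambda_0\partial_x^C F+\mathrm{co}\{\partial_x\langle\lambda_\varphi,\varphi\rangle\}$, the $u$-slot gives $0\in\lambda_0\partial_u^C F+N_U^C(u_*)+\mathrm{co}\{\partial_u\langle\lambda_\varphi,\varphi\rangle\}$ (so a measurable $\mu(t)\in N_U^C(u_*(t))$ absorbs the normal-cone term), and the $v$-slot gives $p\in\lambda_0\partial_v^C F+\mathrm{co}\{\partial_v\langle\lambda_\varphi,\varphi\rangle\}$. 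Reassembling produces the stated $(\dot p(t),-\mu(t),p(t))$ inclusion. The transversality and nontriviality conditions copy over verbatim; the Weierstrass condition of Theorem \ref{thm4.2new}, with the ``control'' now being the pair $(u,v)$ ranging over admissible perturbations inside the $R(t)$-ball around $(u_*(t),\dot x_*(t))$, becomes exactly the stated inequality once $\langle p,\phi(x_*,u,v)\rangle$ is recognized as $\langle p,v\rangle$.

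For the explicit multiplier form, two sub-cases arise. If $\varphi$ is strictly differentiable, Theorem \ref{New} applied to $(P_{ECS})$ immediately delivers the measurable $\lambda_\varphi(t)\in N^C_{K_\varphi}(\varphi(x_*,u_*,\dot x_*))$ and the explicit inclusion, with measurability supplied by \cite[Theorem 14.26]{rw}. If instead $K_\varphi\subseteq\mathbb{R}^m_-$, then at any feasible point the normal cone $N_{K_\varphi}(\varphi)$ sits in $\mathbb{R}^m_+$, so for any nonnegative $\lambda_\varphi$ the Clarke chain rule gives $\partial^C\langle\lambda_\varphi,\varphi\rangle=\partial^C\varphi^T\lambda_\varphi$ without smoothness; a measurable selection then extracts $\lambda_\varphi(\cdot)\in\mathbb{R}^m_+$ achieving the explicit form. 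The multiplier estimate specializes from (\ref{3.9}) by absorbing the Lipschitz constant of $\phi(x,u,v)=v$ (which is $1$) into the constant $k$, yielding $|\lambda_\varphi(t)|\leq\kappa\{k|p(t)|+\lambda_0 k^F\}$. Finally, the continuity-of-$u_*$ refinement and the free-endpoint $\lambda_0=1$ assertion transfer verbatim from Theorems \ref{thm4.3} and \ref{thm4.2new}.

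The main obstacle is purely routine bookkeeping: carefully checking that the unconstrained $\mathbb{R}^{n_x}$-factor of the enlarged control set produces the exact ``zero-in-the-$v$-block'' structure in the WBCQ stated for the implicit system, that the compactness hypothesis on $C^{\varepsilon,R}_\varphi$ indeed matches the hypothesis on $\tilde C^{\varepsilon,R}_*$ from Theorem \ref{thm4.2new}, and that the radius ball in the Weierstrass condition is interpreted correctly in the joint $(u,v)$-variable. Once these identifications are in place the theorem follows as a direct specialization of the Section~3 machinery.
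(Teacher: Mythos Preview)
Your proposal is correct and follows essentially the same route as the paper: the authors state that Theorem \ref{thm4.1} ``follows immediately from Theorems \ref{thm4.2new}, \ref{New}, \ref{thm4.3} and the calculus rule for normal cones'' after identifying $(P_{ECS})$ with an instance of $(P)$ via the enlarged control $(u,v)$ and $\phi(x,u,v)=v$. You have simply spelled out the bookkeeping that the paper leaves implicit, in particular the product rule $N_{U\times\mathbb{R}^{n_x}}(u,v)=N_U(u)\times\{0\}$ and the componentwise unfolding of the Euler inclusion that moves $p(t)$ into the $v$-slot.
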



A special case of the optimal control of implicit systems is the following problem
\begin{eqnarray*}
(P_{DAE})~~~\min &&  {J(x,u):=\int_{t_0}^{t_1} F(x(t), u(t),\dot{x}(t)) dt + f(x(t_0),x(t_1)),}\nonumber \\
s.t. && \varphi (x(t), u(t),\dot{x}(t))=0{,}\\
&& u(t) \in U \quad a.e. \,t\in [t_0,t_1],\nonumber\\
&& (x(t_0), x(t_1))\in S.\nonumber
\end{eqnarray*}
{This problem was studied in \cite[Section 6]{cp} with a time dependent control set $U(t)$.}
Applying Theorem \ref{thm4.1} with $K_\varphi=\{0\}$, we immediately have the following result.
\begin{corollary} \label{thm5.1new} Let  $(x_*,u_*)$ be a  local minimum of radius $R(\cdot)$ for $(P_{DAE})$.
  Suppose that  there exists $\delta>0$ such that $R(t)\geq \delta$.
 Suppose further that
 ${C}_\varphi^{\varepsilon,R}$ as defined in (\ref{C}) with $K_\varphi=\{0\}$  is compact and for
 all $(t,x,u,v) \in {C}_\varphi^{\varepsilon, R}$  the WBCQ holds:
\begin{eqnarray*}
 \lambda_{\varphi}\in \mathbb{R}^m,\, (\alpha,0,0)\in \partial \langle \lambda_{\varphi}, \varphi\rangle(x,u,v)+\{0\}\times N_U (u)\times \{0\}
\Longrightarrow \alpha=0
\end{eqnarray*}
and the mapping ${M}_\varphi$ as defined in (\ref{perturb4.1n}) with $K_\varphi=\{0\}$ is calm at $(0, x,u,v)$.
Then there exist an arc $p$ and   a number $\lambda _{0}$ in $\{0,1\}$,
satisfying the nontriviality condition
$(\lambda _{0},p(t))\neq0, \forall t\in[t_0,t_1]$,
 the transversality condition
$$(p(t_0),-p(t_1)) \in \lambda_0 \partial f(x_*(t_0),x_*(t_1))+N_S (x_*(t_0),x_*(t_1)),$$
and
the Euler adjoint inclusion for almost every $t$:
\begin{eqnarray*}
\lefteqn{(\dot{p}(t),-\mu(t), p(t)) \in
\lambda_0\partial^CF( x_*(t),u_*(t),\dot{x}_*(t))}\nonumber \\
 &&+co\{ \partial \langle \lambda_{\varphi},\varphi\rangle( x_*(t),u_*(t),\dot{x}_*(t))
 :\lambda_{\varphi}\in \mathbb{R}^m
\},
\end{eqnarray*}
where $\mu(\cdot)$ is a measurable function satisfying $\mu(t)\in N_{U}^{C}(u_*(t))$ a.e.,
as well as the Weierstrass condition of radius $R(\cdot)$ for almost every $t$:\begin{eqnarray*}
\lefteqn{ u\in U, \varphi(x_*(t), u, v)=0, |(u,v)-(u_*(t),\dot{x}_*(t))| < R(t)\Longrightarrow }\\
&& \langle p(t),v\rangle-  \lambda_{0}F(x_*(t),u,v)\leq  \langle p(t),\dot{x}_*(t) \rangle -\lambda_0F(x_*(t),u_*(t),\dot{x}_*(t)).
\end{eqnarray*}
Suppose further that $\varphi$ is {strictly} differentiable, then  the Euler adjoint inclusion can be expressed in the explicit form: there exists
measurable functions $\lambda_{\varphi}:[t_0,t_1]\rightarrow \mathbb{R}^m$,
$\mu:[t_0,t_1]\rightarrow \mathbb{R}^{n_{u}}$ with
$\mu(t)\in N_{U}^{C}(u_*(t))$ a.e. such that
$$
(\dot{p}(t),-\mu(t), p(t)) \in
 \lambda_0\partial^CF( x_*(t),u_*(t),\dot{x}_*(t))+\nabla \varphi(x_*(t),u_*(t),\dot{x}_*(t))^T\lambda_{\varphi}(t)
\mbox { a.e.}.
$$
If $N_U^C(u_*(t))=\{0\}$, then  the estimate for the  multiplier $\lambda_{\varphi}(t)$ in (\ref{3.9}) also holds:
$$
|\lambda_{\varphi}(t)|\leq \kappa\{k|p(t)|+\lambda_0k^{F}\}\qquad a.e.
$$
for some positive constants $k, \kappa, k^{F} $, where   $k^{F}$ is the Lipschitz coefficients of $F$ on set $D$ defined as in (\ref{setD}) respectively.
Moreover if $u_*(\cdot)$ is continuous, then the WBCQ and the calmness condition are only required to hold along $(x_*(t),u_*(t),\dot{x}_*(t))$. In the case of free end point, $\lambda_0$ can be taken as $1$.

\end{corollary}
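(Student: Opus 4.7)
The plan is to obtain this corollary as a direct specialization of Theorem \ref{thm4.1} to the DAE case $K_\varphi=\{0\}$, and the whole argument is essentially bookkeeping once the trivial facts about the singleton $\{0\}$ are recorded. The first step is to observe that $N_{\{0\}}(0)=\mathbb{R}^m$ (the normal cone to a single point is the whole space), and that the singleton $\{0\}$ is normally regular, since both the Fr\'echet and limiting normal cones agree with $\mathbb{R}^m$.

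With this in hand, I would translate each piece of the conclusion of Theorem \ref{thm4.1} into the present setting. The WBCQ in Theorem \ref{thm4.1} requires $\lambda_\varphi\in N_{K_\varphi}(\varphi(x,u,v))$, which, because $\varphi(x,u,v)=0$ on $M_\varphi(0)$ and $K_\varphi=\{0\}$, collapses to $\lambda_\varphi\in\mathbb{R}^m$, matching the corollary's hypothesis exactly. Similarly, the convex-hull term in the Euler adjoint inclusion becomes $\mathrm{co}\{\partial\langle\lambda_\varphi,\varphi\rangle(\cdot):\lambda_\varphi\in\mathbb{R}^m\}$, and the membership $(x_*(t),u,v)\in M_\varphi(0)$ in the Weierstrass condition amounts to $u\in U$ together with $\varphi(x_*(t),u,v)=0$.

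Next, for the explicit multiplier form, Theorem \ref{thm4.1} gives it whenever either $K_\varphi\subseteq\mathbb{R}^m_-$ or $\varphi$ is strictly differentiable; here the latter hypothesis is imposed in the corollary, so the explicit form is available, with $\partial^C\varphi$ replaced by the Jacobian $\nabla\varphi$. For the multiplier bound, Theorem \ref{thm4.1} needs $N_U^C(u_*(t))=\{0\}$ together with normal regularity of $K_\varphi$; the latter is automatic for the singleton $\{0\}$, so the estimate $|\lambda_\varphi(t)|\le\kappa\{k|p(t)|+\lambda_0 k^F\}$ transfers verbatim. The nontriviality and transversality conditions, the continuity refinement for continuous $u_*(\cdot)$, and the statement $\lambda_0=1$ in the free-endpoint case all carry over unchanged from Theorem \ref{thm4.1}.

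The only slightly delicate point is not an obstacle but a verification: one must check that the set ${C}_\varphi^{\varepsilon,R}$ defined via $M_\varphi(0)$ in \eqref{C} with $K_\varphi=\{0\}$ coincides with the natural object $\{(t,x,u,v):\varphi(x,u,v)=0,\ u\in U,\ |x-x_*(t)|\le\varepsilon,\ |(u,v)-(u_*(t),\dot x_*(t))|\le R(t)\}$ up to closure, which is immediate from the definitions. After these observations, the corollary follows by applying Theorem \ref{thm4.1} to $(P_{DAE})$ and writing out the specialized forms.
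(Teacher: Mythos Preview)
Your proposal is correct and follows precisely the paper's own approach: the paper states only that the corollary is obtained by ``Applying Theorem~\ref{thm4.1} with $K_\varphi=\{0\}$,'' and your argument spells out the routine bookkeeping (namely $N_{\{0\}}(0)=\mathbb{R}^m$ and the normal regularity of $\{0\}$) needed to see that each hypothesis and conclusion of Theorem~\ref{thm4.1} specializes to the stated form.
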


 Note that in   \cite[Theorem 6.1 and Corollary 6.2]{cp}, a similar result is obtained. Their results allow for the dynamic system to be nonautonomous but they require the calibrated constraint qualification or MFC to hold which are stronger than WBCQ+calmness.

\section{Optimal control of semi-explicit DAEs}
In this section we
consider  the following optimal control problem of {semi-explicit} DAEs:
\begin{eqnarray*}
(P_{seDAE})~~~\min &&  J(x,y,u):=\int_{t_0}^{t_1} F(x(t),y(t), u(t)) dt + f(x(t_0),x(t_1)),\nonumber \\
s.t. && \dot{x}(t)=\phi(x(t),y(t), u(t)) \quad a.e. \,t\in [t_0,t_1],\nonumber\\
&&  0=h(x(t),y(t), u(t))\quad a.e. \,t\in [t_0,t_1],\nonumber\\
&& u(t) \in U \quad a.e. \,t\in [t_0,t_1],\nonumber\\
&& (x(t_0), x(t_1))\in S,\nonumber
\end{eqnarray*}
where $F:\mathbb{R}^{n_{x}}\times \mathbb{R}^{n_{y}}\times \mathbb{R}^{n_{u}}\rightarrow \mathbb{R}$,
$\phi :\mathbb{R}^{n_{x}}\times \mathbb{R}^{n_{y}}\times \mathbb{R}^{n_{u}}\rightarrow \mathbb{R}^{n_{x}}$,
$h :\mathbb{R}^{n_{x}}\times \mathbb{R}^{n_{y}}\times \mathbb{R}^{n_{u}}\rightarrow \mathbb{R}^{n_y}$, the others are the same as in $(P)$. In this section, unless otherwise specified we assume that $F, f, \phi, h$ are locally Lipschitz continuous.

The dynamic is said to have{``index $k$''} if one needs to differentiate the algebraic part $(k-1)$-times in time to get the underlying system of ODE \cite{Griepentrog}.
The main restriction on the  necessary optimality condition of the optimal control problem of semi-explicit DAEs is the assumption that the dynamics have ``index one'' (see e.g.\cite{pinhovinter,cp,igor}), i.e., the Jacobian matrix $\nabla_y h (x_{*}(t),y_{*}(t), u_{*}(t))$ has full  rank, or  equivalently  
$$\det \nabla_y h (x_{*}(t),y_{*}(t), u_{*}(t))\neq 0.$$
In the index one case, by using the implicit function theory, the variable $y(t)$ can be solved locally and hence the system behaves like an ODE.
Derivation of optimality conditions for higher index problems is a challenging area.

We take two approaches to study the problem. In the first approach  we treat $y$ as a control and explore the consequences of Theorem \ref{thm4.2new} and in the second approach we treat $y$ as a state and explore the consequences of Corollary \ref{thm5.1new}. Both approaches allow us to derive necessary optimality conditions  without the assumption that the problem is of index one. Such approaches have also been taken in \cite{igor} to specialize the results of \cite{cp} to the control of semi-explicit DAEs. But their results can only be applied to problem of index one.

If we treat $y$ as a control, then both $u(\cdot)$ and $y(\cdot)$ are  measurable functions on $[t_0,t_1]$ such that $u(t)\in U$ for almost every $t \in [t_0,t_1]$. The state  corresponding to a given control $(u(\cdot), y(\cdot))$, refers to an absolutely continuous function  $x(\cdot)$ which together with $u(\cdot), y(\cdot)$ satisfying all the constraints of the problem ($P_{seDAE}$). We call such a pair $(x(\cdot), y(\cdot), u(\cdot))$ an admissible pair.   Let { $R:[t_0,t_1]\rightarrow (0,+\infty]$ } be a radius function. We say that $(x_{*},y_{*}, u_{*})$ is a local minimum of radius $R(\cdot)$ for $(P_{seDAE})$ if it
minimizes the value of the cost
function $J(x, y, u)$ over all admissible pairs $(x, y, u)$ which satisfies
$$| x(t)-x_{*}(t)|\leq\varepsilon, \,|(y(t), u(t))-(y_{*}(t), u_*(t))|\leq R(t) \mbox{ a.e.,} \int_{t_0}^{t_1}|\dot{x}(t)-\dot{x}_{*}(t)|dt\leq \varepsilon.$$
Define a set-valued map as  the perturbed constrained system:
\begin{eqnarray}\label{Mh}
M_h(\Theta):=\left\{(x,y,u)\in \mathbb{R}^{n_{x}}\times \mathbb{R}^{n_{y}}\times U :   h(x,y,u)+ \Theta=0 \right\}
\end{eqnarray}
and
\begin{eqnarray*}
&& {S}_h^{\varepsilon,R}(t):=\{(x,y,u)\in M_h(0): |x-x_*(t)|\leq \varepsilon, |(y,u)-(y_{*}(t), u_*(t))|\leq R(t)\},\\
&& {C}_h^{\varepsilon,R}:=cl\{(t,x,y,u)\in [t_0,t_1]\times \mathbb{R}^{n_{x}}\times \mathbb{R}^{n_{y}}\times U:  (x,y,u)\in  {S}_h^{\varepsilon,R}(t)\}.
\end{eqnarray*}

A simple application of Theorem \ref{thm4.2new} yields the following results.
\begin{thm}\label{thm5.1}  Let  $(x_*,y_*,u_*)$ be a  local minimum of radius $R(\cdot)$ for $(P_{seDAE})$.
  Suppose that ${C}_h^{\varepsilon,R}$ is compact, and there exists $\delta>0$ such that $R(t)\geq \delta$.
 Suppose further that, for
 all $(t,x,y,u) \in {C}_h^{\varepsilon, R}$  the WBCQ holds:
\begin{eqnarray}\label{WBCQw}
 \lambda\in \mathbb{R}^{n_y},  (\alpha,0,0)\in \partial \langle \lambda, h\rangle(x,y,u)+\{(0,0)\}\times N_U (u)
\Longrightarrow \alpha=0,
\end{eqnarray}
and the mapping $M_h$ is calm at $(0, x,y,u)$.
Then there exist an arc $p$ and   a number $\lambda _{0}$ in $\{0,1\}$,
satisfying the nontriviality condition
$(\lambda _{0},p(t))\neq0, \forall t\in[t_0,t_1]$,
 the transversality condition
$$(p(t_0),-p(t_1)) \in \lambda_0 \partial f(x_*(t_0),x_*(t_1))+N_S (x_*(t_0),x_*(t_1)),$$
and
the Euler adjoint inclusion for almost every $t$:
\begin{eqnarray}
\lefteqn{(\dot{p}(t), 0,-\mu(t)) \in
\partial^C \{\langle -p(t), \phi\rangle+\lambda_0F\} ( x_*(t),y_*(t),u_*(t))}\nonumber \\
&& +co\{ \partial \langle  \lambda,  h( x_*(t),y_*(t),u_*(t))\rangle: \lambda\in \mathbb{R}^{n_y}\},\nonumber \label{EulerN}
\end{eqnarray}
where $\mu(\cdot)$ is a measurable function satisfying $\mu(t)\in N_{U}^{C}(u_*(t))$ a.e.,
as well as the Weierstrass condition of radius $R(\cdot)$ for almost every $t$:
\begin{eqnarray*}
&&u\in U, \,\,h(x_*(t),y,u)=0,\,\, \,  |(y,u)-(y_{*}(t), u_*(t))|< R(t)\Longrightarrow\\
&&\langle p(t),\phi(x_*(t),y,u) \rangle-\lambda_{0}F(x_*(t),y,u) \leq \langle p(t), \phi (x_*(t),y_*(t),u_*(t)) \rangle \\
&& \qquad \qquad -\lambda_{0}F(x_*(t),y_*(t),u_*(t)).
\end{eqnarray*}
Moreover if we assume further that  $h$ is strictly differentiable,
then the Euler adjoint inclusion can be replaced by the one in the explicit multiplier form, i.e., there exist
measurable functions $\lambda_{h}:[t_0,t_1]\rightarrow \mathbb{R}^{n_y}$, $\mu:[t_0,t_1]\rightarrow \mathbb{R}^{n_{u}}$ with $\mu(t)\in N_{U}^{C}(u_*(t))$ a.e.
satisfying
\begin{eqnarray*}
\lefteqn{(\dot{p}(t),0,-\mu(t)) \in
 \partial^C \{\langle -p(t), \phi\rangle+\lambda_0F\} ( x_*(t),y_*(t),u_*(t))}\nonumber\\
&& + \nabla h(x_*(t),y_*(t),u_*(t))^T\lambda_{h}(t), \mbox { a.e. } t\in [t_0,t_1].
\end{eqnarray*}
If $N_U(u_*(t))={\{0\}}$, then  the estimate for the  multiplier $\lambda_{h}(t)$ in (\ref{3.9}) also holds:
$$
|{\lambda_{h}}(t)|\leq \kappa\{k|p(t)|+\lambda_0k^{F}\}\qquad a.e.
$$
for some positive constants $k, \kappa, k^{F} $, where   $k^{F}$ is the Lipschitz coefficients of $F$ on set $D$ defined as in (\ref{setD}) respectively.
Moreover if $u_*(\cdot)$ is continuous, then the WBCQ and the calmness condition are only required to hold along $(x_*(t),y_*(t),u_*(t))$. In the case of free end point, $\lambda_0$ can be taken as $1$.
\end{thm}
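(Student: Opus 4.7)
The plan is to recast $(P_{seDAE})$ as an instance of problem $(P)$ from Section \ref{section3} and invoke Theorems \ref{thm4.2new}, \ref{New}, and \ref{thm4.3} essentially verbatim. I would identify the state with $x$, the control with the augmented vector $\tilde{u} := (y,u)$, the control set with $\tilde{U} := \mathbb{R}^{n_y} \times U$, the dynamics with $\tilde{\phi}(x,\tilde{u}) := \phi(x,y,u)$, the mixed constraint mapping with $\Phi(x,\tilde{u}) := h(x,y,u)$, and the constraint set with $\Omega := \{0\}$. Under this identification, $N_{\{0\}}(0) = \mathbb{R}^{n_y}$ and $N_{\mathbb{R}^{n_y}}(y) = \{0\}$, so the WBCQ (\ref{WBCQ1}) collapses precisely to (\ref{WBCQw}), the set $\tilde{C}_*^{\varepsilon,R}$ of Section \ref{section3} coincides with $C_h^{\varepsilon,R}$, and the perturbed constraint map (\ref{perturb3.1}) coincides with $M_h$ of (\ref{Mh}). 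The local minimum concept matches as well, since $|(y,u) - (y_*,u_*)| \leq R(t)$ is the radius condition in the grouped-control formulation.

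With the identifications in place, all hypotheses of Theorem \ref{thm4.2new} are verified, yielding an arc $p$ and $\lambda_0 \in \{0,1\}$ with the nontriviality and transversality conditions stated. For the Euler inclusion, note that $\partial^C$ and the limiting normal cone split across the components $y$ and $u$ of $\tilde{u}$: the limiting normal cone to $\tilde{U}$ at $(y_*(t),u_*(t))$ is $\{0\} \times N_U(u_*(t))$, so the Clarke normal cone appearing in (\ref{EulerNnew}) contributes a zero in the $y$-slot and a multiplier $-\mu(t) \in -N_U^C(u_*(t))$ in the $u$-slot. The convex-hull term over $\lambda \in N_\Omega(0) = \mathbb{R}^{n_y}$ yields exactly the stated $co\{\partial\langle \lambda, h\rangle: \lambda \in \mathbb{R}^{n_y}\}$, and the Weierstrass condition translates directly since $(x_*(t),\tilde{u}) \in M(0)$ is equivalent to $h(x_*(t),y,u) = 0$ with $u \in U$.

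For the strictly differentiable case I would apply Theorem \ref{New}: the set $\Omega = \{0\}$ is trivially normally regular, so whenever $N_U^C(u_*(t)) = \{0\}$ the estimate (\ref{3.9}) applies. Measurability of $\lambda_h(t)$ comes from the cited \cite[Theorem 14.26]{rw} selection argument, and the constants $k, \kappa, k^F$ arise from the pseudo-Lipschitz modulus of the underlying differential inclusion together with the Lipschitz coefficient of $F$ on the compact set $D$ defined in (\ref{setD}); the $k^\phi$ term absorbs into the $(\dot{p},0,-\mu)-(\zeta,\eta)$ bookkeeping exactly as in the proof of Theorem \ref{New}. The continuity-of-$u_*$ refinement follows from Theorem \ref{thm4.3}: continuity of $u_*$ (and hence implicit continuity of the admissible neighborhoods) forces every admissible cluster point to lie on the graph of $(x_*,y_*,u_*)$.

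The main obstacle I anticipate is purely organizational: verifying that the block structure of the Clarke subdifferential and of $N_{\tilde{U}}^C$ relative to the grouped control $(y,u)$ produces the stated componentwise Euler inclusion (with a zero in the $y$-component precisely because $y$ is unconstrained, and the $-\mu(t)$ in the $u$-component), and that the assumption $N_U^C(u_*(t)) = \{0\}$ in the estimate portion matches the slot produced by the splitting. No new analytic content is required beyond the results of Section \ref{section3}.
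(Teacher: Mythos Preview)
Your proposal is correct and mirrors the paper's own treatment: the paper states that Theorem \ref{thm5.1} follows as ``a simple application of Theorem \ref{thm4.2new}'' with $y$ adjoined to the control, and you have carried out exactly that identification (state $x$, control $(y,u)\in\mathbb{R}^{n_y}\times U$, constraint $h=0$ with $\Omega=\{0\}$), together with the supplementary invocations of Theorems \ref{New} and \ref{thm4.3} for the explicit-multiplier estimate and the continuity refinement. The only cosmetic point is that in the final clause the grouped control is $(y_*,u_*)$, so the admissible-cluster-point argument from Theorem \ref{thm4.3} strictly requires continuity of the pair $(y_*,u_*)$ rather than of $u_*$ alone; this matches the paper's informal phrasing.
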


Note that our necessary optimality condition is not the so-called strong maximum principle as in \cite[Theorem 3.1]{pinhovinter}. It was shown in  \cite{pinhovinter} by using the following example that  that a strong maximum principle may not hold if the velocity set is nonconvex. But the conclusion of our necessary optimality condition is more than just weak maximum principle as in \cite[Theorem 3.2]{pinhovinter}. In fact only the nontriviality condition, the transversality condition and  the Euler adjoint inclusion alone constitute the weak maximum principle, let alone the extra Weiersrass condition. A consequence is that we derive the weak maximum principle under the WBCQ plus calmness condition which allows application to problems with index higher than one.

\begin{example}\label{ex5.1}\cite{pinhovinter}.
\begin{eqnarray*}
~~~\min &&  -x(1)\nonumber \\
s.t. && \dot{x}(t)=(u(t)-y(t))^2\quad a.e. \,t\in [0, 1],\nonumber\\
&&  0=u(t)-y(t)\quad a.e. \,t\in [0, 1],\nonumber\\
&& u(t) \in [-1,1]{,}\\
&& x(0)=0.
\end{eqnarray*}
\end{example}
In this example, the function $h$ is independent of $x$ and is affine. In fact  if $h$ is independent of $x$ and is affine, by  \cite[Proposition 2.2]{anli}, $M_h$ is calm. Consequently  the WBCQ plus calmness condition holds automatically.
Then the following results follow from Theorem \ref{thm5.1}.
\begin{corollary}\label{coro5.1}  Let  $(x_*,y_*,u_*)$ be a  local minimum of radius $R(\cdot)$ for $(P_{seDAE})$.
  Suppose that $F, f, \phi$ are locally Lipschitz continuous,  $h$ is independent of  the variable $x$ and is affine   and $U$ is a union of finitely many polyhedral sets. {Suppose further that} ${C}_h^{\varepsilon,R}$ is compact, and there exists $\delta>0$ such that $R(t)\geq \delta$.
Then the conclusions of Theorem \ref{thm5.1} hold with the  explicit Euler adjoint inclusion
\begin{eqnarray*}
\lefteqn{(\dot{p}(t),0,-\mu(t)) \in \partial^C \{\langle -p(t), \phi\rangle+\lambda_0F\} ( x_*(t),y_*(t),u_*(t))}\nonumber\\
&& +\nabla h(x_*(t),y_*(t),u_*(t))^T\lambda_{h}(t), \mbox { a.e. } t\in [t_0,t_1].
\end{eqnarray*}
If $N_U^C(u_*(t))=\{0\}$, then  the estimate for the  multiplier $\lambda_{h}(t)$ in (\ref{3.9}) also holds:
$$
|{\lambda_{h}}(t)|\leq \kappa\{k|p(t)|+\lambda_0k^{F}\}\qquad a.e.
$$
for some positive constants $k, \kappa, k^{F} $, where   $k^{F}$ is the Lipschitz coefficients of $F$ on set $D$ defined as in (\ref{setD}) respectively.
\end{corollary}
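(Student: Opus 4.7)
The plan is to deduce Corollary \ref{coro5.1} as a direct specialization of Theorem \ref{thm5.1}: under the structural hypotheses on $h$ and $U$, the two qualifications required by Theorem \ref{thm5.1} (the WBCQ \eqref{WBCQw} and calmness of $M_h$) will hold automatically at every point of $C_h^{\varepsilon,R}$, and strict differentiability of $h$ (in fact $h$ is $C^\infty$) will then furnish the explicit multiplier form of the Euler adjoint inclusion and the norm estimate.

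First I would verify the WBCQ. Since $h$ is affine and independent of $x$, write $h(x,y,u)=Ay+Bu+c$ for suitable matrices $A,B$ and vector $c$. Then $h$ is strictly differentiable with $\nabla h(x,y,u)=(0,A,B)$, so for every $\lambda\in\mathbb{R}^{n_y}$,
\[
\partial\langle\lambda,h\rangle(x,y,u)=\{(0,A^T\lambda,B^T\lambda)\}.
\]
Consequently the inclusion $(\alpha,0,0)\in\partial\langle\lambda,h\rangle(x,y,u)+\{(0,0)\}\times N_U(u)$ compels $\alpha=0$ by inspection of the $x$-component alone, irrespective of $\lambda$ and the choice of normal to $U$. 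Thus \eqref{WBCQw} holds trivially at every admissible point.

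Next I would verify calmness of $M_h$. Because $h$ is affine in $(y,u)$ and $U$ is a finite union of polyhedral sets, $\mathrm{gph}\,M_h$ is itself a finite union of polyhedra in $\mathbb{R}^{n_y}\times\mathbb{R}^{n_x+n_y+n_u}$. By Robinson's upper-Lipschitz theorem for polyhedral multifunctions, encapsulated in \cite[Proposition 2.2]{anli} and already invoked by the authors in the paragraph preceding the corollary, $M_h$ is upper-Lipschitz, hence calm, at every point of its graph. With both qualifications at hand for all $(t,x,y,u)\in C_h^{\varepsilon,R}$, Theorem \ref{thm5.1} applies and delivers the arc $p$, the scalar $\lambda_0$, and the measurable multipliers $\mu(\cdot)$ and $\lambda_h(\cdot)$ fulfilling the nontriviality, transversality, explicit Euler adjoint inclusion, and Weierstrass conditions; when additionally $N_U^C(u_*(t))=\{0\}$, the stated bound on $|\lambda_h(t)|$ is inherited verbatim from Theorem \ref{thm5.1}.

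I do not expect any serious obstacle here: the corollary reduces to checking that the technical hypotheses of Theorem \ref{thm5.1} are automatic in this polyhedral-plus-affine setting. The only points worth emphasizing are that the $x$-independence of $h$ is exactly what makes WBCQ free of charge (the $x$-block of $\nabla h$ vanishes), and that the joint polyhedral structure of $h$ and $U$ is what yields calmness via Robinson's theorem. Neither step requires any further analysis, so the remainder of the argument is simply the citation of Theorem \ref{thm5.1}.
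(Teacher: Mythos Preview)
Your proposal is correct and follows essentially the same approach as the paper: the paper notes (in the paragraph immediately preceding the corollary) that when $h$ is independent of $x$ and affine, the WBCQ is automatic and, by \cite[Proposition~2.2]{anli}, $M_h$ is calm, so the result follows directly from Theorem~\ref{thm5.1}. Your write-up simply makes these two verifications explicit, which is exactly what is needed.
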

 Taking $\varepsilon >0$ to be finite and $R(t)=\infty$, it is obvious that $(x_*,y_*,u_*)=(0,0,0)$ is a local minimum of radius $R$ for the problem in Example \ref{ex5.1},  the set $$C_h^{\varepsilon, R}:=\{(t,x,y,u)\in [0,1]\times R\times R\times [-1,1]: y=u, |x|\leq \varepsilon\}$$
is compact. Hence all assumptions in Corollary \ref{coro5.1} holds.  Since it is a free end-point problem, $\lambda_0=1$. It is easy to show that all conditions of the necessary optimality conditions hold with $p(t)\equiv1, \lambda_h(t)\equiv0$.

Now we take the second approach by considering $z=(x,y)$ as the state variable.  We consider the problem $P_{seDAE}$ as the following implicit control problem:
\begin{eqnarray*}
(P_{IDAE})~~~\min &&  J(z,u):=\int_{t_0}^{t_1} F(z(t), u(t)) dt + f(x(t_0),x(t_1)),\nonumber \\
s.t. && \varphi (z(t), u(t),\dot{z}(t))=0{,}\\
&& u(t) \in U \quad a.e. \,t\in [t_0,t_1],\nonumber\\
&& (x(t_0), x(t_1))\in S {,}\nonumber
\end{eqnarray*}
 with $z=(x,y)$ and
$$\varphi(z,u,v):=(\phi(z,u)-v_1,h(z,u))^{T},\,\, v:=(v_1,0)$$
$v_1\in \mathbb{R}^{n_x}$ and apply Corollary \ref{thm5.1new}.
The state  corresponding to a given control $u(\cdot)$, refers to an absolutely continuous function  $(x(\cdot),y(\cdot))$ which together with $u(\cdot)$ satisfying all conditions in $(P_{IDAE})$.   Let { $R:[t_0,t_1]\rightarrow (0,+\infty]$ }  be a radius function. We say that $(x_{*},y_{*}, u_{*})$ is a local minimum of radius $R(\cdot)$ for $P_{IDAE}$ if it
minimizes  the value of the cost
function $J(x,y, u)$ over all admissible pairs $(x,y,u)$ which satisfies
\begin{eqnarray*}
&& | (x(t),y(t))-(x_*(t),y_*(t))|\leq\varepsilon, \,|(u(t), \dot{x}(t),\dot{y}(t))-(u_{*}(t), \dot{x}_*(t) ,  \dot{y}_*(t))|\leq R(t) \mbox{ a.e.,} \\
&& \int_{t_0}^{t_1}|(\dot{x}(t),\dot{y}(t))-(\dot{x}_*(t) ,  \dot{y}_*(t))|dt\leq \varepsilon.
\end{eqnarray*}



Let $z_*:=(x_*,y_*)$. Define a set-valued map as  the perturbed constrained system:
\begin{equation}\label{perturb4.1new}
M_{\varphi}(\Theta):=\left\{(x,y,u,v)\in \mathbb{R}^{n_{x}}\times \mathbb{R}^{n_{y}}\times U \times\mathbb{R}^{n_{x}+n_{y}}: \varphi(x,y,u,v)+ \Theta=0\right\},
\end{equation}
and
\begin{eqnarray*}
&&S_{\varphi}^{\varepsilon,R}(t):=\{(z,u,v)\in M_{\varphi}(0): |z-z_*(t)|\leq \varepsilon, |(u,v)-(u_{*}(t),  \dot{z}_*(t))|\leq R(t)\},\\
&&C_{\varphi}^{\varepsilon,R}:=cl\{(t,z,u,v)\in [t_0,t_1]\times \mathbb{R}^{n_{x}}\times \mathbb{R}^{n_{y}} \times \mathbb{R}^{n_{u}}\times \mathbb{R}^{n_{x}}\times \mathbb{R}^{n_{y}} :  (z,u,v)\in S_{\varphi}^{\varepsilon,R}(t)\}.
\end{eqnarray*}
With these identifications, we can apply Corollary \ref{thm5.1new} and  obtain the results as follows.
\begin{thm} \label{thm5.2} Let  $(x_*,y_*,u_*)$ be a  local minimum of radius $R(\cdot)$ for $(P_{seDAE})$ in the above sense.
  Suppose that  there exists $\delta>0$ such that $R(t)\geq \delta$.
 Suppose further that $C_{\varphi}^{\varepsilon,R}$ is compact and for
 all $(t,z,u,v) \in C_{\varphi}^{\varepsilon,R}$  the WBCQ holds:
\begin{eqnarray}
 \begin{array}{l}  {\lambda \in  \mathbb{R}^{n_y},}  (\alpha_1,\alpha_2,0)\in \partial \langle\lambda, h\rangle (x,y,u)+\{(0,0)\}\times N_U (u)
\end{array}  \Longrightarrow \alpha_1=0 , \alpha_2=0\label{WBCQThm5.2}
\end{eqnarray}
and the mapping $M_{\varphi}$  defined  {as}  in (\ref{perturb4.1new})  is calm at $(0, x,y,u,v)$.
Then there exist an arc $p$ and   a number $\lambda _{0}$ in $\{0,1\}$,
satisfying the nontriviality condition
$(\lambda _{0},p(t))\neq0, \forall t\in[t_0,t_1]$,
 the transversality condition
$$(p(t_0),-p(t_1)) \in \lambda_0 \partial f(x_*(t_0),x_*(t_1))+N_S (x_*(t_0),x_*(t_1)),$$
and
the Euler adjoint inclusion for almost every $t$:
\begin{eqnarray*}
\lefteqn{(\dot{p}(t),0,-\mu(t)) \in\lambda_0\partial^CF(x_*(t),y_*(t),u_*(t))}\nonumber \\
&&+co\{ \partial  (\langle \lambda_\phi,\phi\rangle+\langle \lambda_h,h\rangle)(x_*(t),y_*(t),u_*(t)):  \lambda_\phi \in\mathbb{R}^{n_x},  \lambda_h\in \mathbb{R}^{n_y}\},
\end{eqnarray*}
where $\mu(\cdot)$ is a measurable function satisfying $\mu(t)\in N_{U}^{C}(u_*(t))$ a.e.,
as well as the Weierstrass condition of radius $R(\cdot)$ for almost every $t$:
\begin{eqnarray*}
&& \phi(x_*(t),y_*(t),u)-w=0, h(x_*(t),y_*(t),u)=0,|(u,v)-(u_*(t),\dot{z}_*(t)| < R(t), \\
&&u\in U\Longrightarrow\langle p(t),v-\dot{z}_*(t)\rangle
\leq \lambda_{0}(F(x_*(t),y_*(t),u)-F(x_*(t),y_*(t),u_*(t)){)}.
\end{eqnarray*}
Suppose further that $\phi, h$ are {strictly} differentiable, then  the Euler adjoint inclusion can be expressed in the explicit form: there exist
measurable functions $ \lambda_h:[t_0,t_1]\rightarrow \mathbb{R}^{n_y}$,
$\mu:[t_0,t_1]\rightarrow \mathbb{R}^{n_{u}}$ with
$\mu(t)\in N_{U}^{C}(u_*(t))$ a.e. such that
\begin{eqnarray*}
\lefteqn{(\dot{p}(t),0,-\mu(t)) \in
 \lambda_0\partial^CF(x_*(t),y_*(t),u_*(t))}\nonumber \\
&&+\nabla \phi(x_*(t),y_*(t),u_*(t))^Tp(t)+\nabla h(x_*(t),y_*(t),u_*(t))^T\lambda_h (t).
\end{eqnarray*}
In the case of free end point, $\lambda_0$ can be taken as $1$.
\end{thm}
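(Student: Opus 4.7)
The plan is to recast $(P_{seDAE})$ as the implicit control problem $(P_{IDAE})$ with composite state $z=(x,y)$ and velocity $v=(v_1,v_2)\in\mathbb{R}^{n_x}\times\mathbb{R}^{n_y}$, using the implicit dynamics $\varphi(z,u,v):=(\phi(z,u)-v_1,h(z,u))^T$, and then to specialize Corollary \ref{thm5.1new}. In these coordinates the constraint $\varphi(z,u,\dot z)=0$ reproduces both $\dot x=\phi$ and $h=0$, while $\dot y=v_2$ is unconstrained; the sets $S_\varphi^{\varepsilon,R}$, $C_\varphi^{\varepsilon,R}$ and the perturbed mapping $M_\varphi$ defined in (\ref{perturb4.1new}) are exactly those required by Corollary \ref{thm5.1new}, and the local-minimum notion introduced just above Theorem \ref{thm5.2} is the local minimum of radius $R(\cdot)$ for $(P_{IDAE})$.

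I would next verify the hypotheses of Corollary \ref{thm5.1new}. Compactness of $C_\varphi^{\varepsilon,R}$, the lower bound $R(t)\geq\delta$, and calmness of $M_\varphi$ are assumed outright; only the WBCQ for $\varphi$ needs to be deduced from (\ref{WBCQThm5.2}). Writing a generic $\varphi$-multiplier as $\lambda_\varphi=(\lambda_\phi,\lambda_h)\in\mathbb{R}^{n_x}\times\mathbb{R}^{n_y}$ and using that $\varphi$ is affine in $v_1$ and independent of $v_2$, the separable-sum rule yields
\[\partial\langle\lambda_\varphi,\varphi\rangle(z,u,v)=\partial(\lambda_\phi^T\phi+\lambda_h^T h)(z,u)\times\{(-\lambda_\phi,0)\}.\]
Hence $(\alpha,0,0)\in\partial\langle\lambda_\varphi,\varphi\rangle+\{0\}\times N_U(u)\times\{0\}$ forces $\lambda_\phi=0$ through the $v_1$-slot, and the $(z,u)$-slots collapse to $(\alpha,0)\in\partial\langle\lambda_h,h\rangle(z,u)+\{(0,0)\}\times N_U(u)$; assumption (\ref{WBCQThm5.2}) then gives $\alpha=0$.

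Applying Corollary \ref{thm5.1new} to $(P_{IDAE})$ produces an adjoint arc $P(t)=(p(t),p_y(t))$ for the composite state $z$, a number $\lambda_0\in\{0,1\}$, and measurable multipliers satisfying the four conclusions in the implicit form. I would then decompose these componentwise. Because $f$ and $S$ depend only on $x(t_0),x(t_1)$, the transversality inclusion has zero content in the $y$-slots and reduces to the one claimed for $p$, while enforcing $p_y(t_0)=p_y(t_1)=0$. In the Euler adjoint inclusion the $v_2$-slot reads $p_y(t)\in\{0\}$ a.e.\ since $\varphi$ has no $v_2$-dependence, so $p_y\equiv 0$; the $v_1$-slot pins down the multiplier via $-\lambda_\phi(t)=p(t)$; substituting back into the $(z,u)$-slots produces the Euler adjoint inclusion stated in Theorem \ref{thm5.2}, with the convex combination ranging over $\lambda_\phi\in\mathbb{R}^{n_x}$ and $\lambda_h\in\mathbb{R}^{n_y}$. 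The Weierstrass inequality simplifies in the same way because $p_y\equiv 0$ eliminates the $v_2$-contribution and leaves exactly the claimed form.

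For the final assertions, strict differentiability of $\phi$ and $h$ makes $\varphi$ strictly differentiable, so the explicit multiplier form in Corollary \ref{thm5.1new} applies and, after the identification $\lambda_\phi(t)=-p(t)$, becomes the explicit Euler inclusion displayed in Theorem \ref{thm5.2}; the free-endpoint conclusion $\lambda_0=1$ is inherited verbatim. The main delicate step is the WBCQ reduction together with the bookkeeping that turns the generic $\varphi$-multiplier $\lambda_\varphi=(\lambda_\phi,\lambda_h)$ into $(-p(t),\lambda_h(t))$; once this identification is made, the remaining translation to the form of Theorem \ref{thm5.2} is routine.
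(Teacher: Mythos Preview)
Your proposal is correct and follows essentially the same route as the paper: both recast $(P_{seDAE})$ as $(P_{IDAE})$ with state $z=(x,y)$ and $\varphi=(\phi-v_1,h)$, reduce the WBCQ for $\varphi$ to (\ref{WBCQThm5.2}) via the $v_1$-slot (which kills $\lambda_\phi$), apply Corollary~\ref{thm5.1new}, and then read off $p_y\equiv 0$ from the $v_2$-slot and $\lambda_\phi=-p$ from the $v_1$-slot in the strictly differentiable case. One small imprecision: the identification $-\lambda_\phi(t)=p(t)$ is only meaningful in the explicit (strictly differentiable) multiplier form, not inside the convex hull of the nonsmooth Euler inclusion---there you simply project onto the $(z,u)$-slots, exactly as the paper does.
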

\begin{proof} By Corollary \ref{thm5.1new},  if for any $(t, z, u,v) \in C_{\varphi}^{\varepsilon,R}$, the WBCQ holds:
\begin{eqnarray}
\left \{ \begin{array}{l}  (\alpha,0,0)\in \partial_{z,u}\{\langle\lambda_{1}, \phi\rangle +\langle\lambda_{2}, h \rangle\} (z,u)\times \{0\}
\\
\qquad +\{(0,0)\}\times N_U (u)\times\{-\lambda_1\},
\\
\lambda_{1} \in  \mathbb{R}^{n_{x}},\,\lambda_{2} \in  \mathbb{R}^{n_y}
\end{array} \right. \Longrightarrow \alpha=0 \label{WBCQe}
\end{eqnarray}
and  the mapping $M_{\varphi}$ is calm at $(0, x,y,u,v)$, then there exist arcs $p_x,p_y$ and $\lambda_0\in \{0,1\}$, satisfying the nontriviality condition
$(\lambda _{0},p_x(t),p_y(t))\neq0, \forall t\in[t_0,t_1]$,
 the transversality condition
$$(p_x(t_0),-p_x(t_1)) \in \lambda_0 \partial f(x_*(t_0),x_*(t_1))+N_S(x_*(t_0),x_*(t_1)); \,\,p_y(t_0)=0, p_y(t_1)=0,$$
and
the Euler adjoint inclusion for almost every $t$:
\begin{eqnarray*}
\lefteqn{(\dot{p}_{x}(t),\dot{p}_{y}(t),-\mu(t), p_{x}(t),p_{y}(t)) \in}\\
&&  \lambda_0\partial^CF(x_*(t),y_*(t),u_*(t))\times \{(0,0)\}\nonumber \\
&&+co\{ \partial(\langle \lambda_{\phi},\phi \rangle+\langle \lambda_h, h\rangle )( x_*(t),u_*(t),\dot{x}_*(t))\times \{(-\lambda_\phi, 0)\}: \lambda_\phi \in \mathbb{R}^{n_x}, \lambda_h \in \mathbb{R}^{n_y}\}
\end{eqnarray*}
where $\mu(\cdot)$ is a measurable function satisfying $\mu(t)\in N_{U}^{C}(u_*(t))$ a.e.,
as well as the Weierstrass condition of radius $R(\cdot)$ for almost every $t$:
\begin{eqnarray*}																																		
&& \phi(x_*(t),y_*(t),u)-v_1=0, h(x_*(t),y_*(t),u)=0,|(u,v)-(u_*(t),\dot{z}_*(t))| < R(t), \\
&& u\in U\Longrightarrow\langle p(t),v-\dot{z}_*(t)\rangle
\leq \lambda_{0}(F(x_*(t),y_*(t),u)-F(x_*(t),y_*(t),u_*(t))).
\end{eqnarray*}
Suppose further that $\phi, h$ are {strictly} differentiable, then  the Euler adjoint inclusion can be expressed in the explicit form: there exist
measurable functions $\lambda_{\phi}:[t_0,t_1]\rightarrow \mathbb{R}^{n_{x}}$, $ \lambda_h:[t_0,t_1]\rightarrow \mathbb{R}^{n_y}$,
$\mu:[t_0,t_1]\rightarrow \mathbb{R}^{n_{u}}$ with
$\mu(t)\in N_{U}^{C}(u_*(t))$ a.e. such that
\begin{eqnarray*}
\lefteqn{(\dot{p}_{x}(t),\dot{p}_{y}(t),-\mu(t), p_{x}(t),p_{y}(t)) \in}\\
&&  \lambda_0\partial^CF(x_*(t),y_*(t),u_*(t))\times \{(0,0)\}\nonumber \\
&&+(\nabla  \phi(x_*(t),y_*(t),u_*(t))^T\lambda_\phi(t) +\nabla  h(x_*(t),y_*(t),u_*(t))^T\lambda_h(t))\times \{\lambda_\phi(t),0\}, a.e.
\end{eqnarray*}

It is easy to see that the WBCQ  (\ref{WBCQe}) is equivalent to the WBCQ (\ref{WBCQThm5.2}) and hence all the conclusions above hold.
From the above  Euler adjoint inclusion
 we get $p_y(t)\equiv 0$. In the case where $\phi, h$ are strictly differentiable, we also get $p_x(t)=\lambda_\phi(t)$ a.e.. Hence by taking $p(t)=p_x(t)$, the conclusions follow.
\end{proof}

We now compare Theorem \ref{thm5.1} (treating $y$ as a control variable)  with Theorem \ref{thm5.2} (treating $y$ as a state variable).
It is obvious that the WBCQ in (\ref{WBCQThm5.2}) implies
 (\ref{WBCQw}) and so  the WBCQ required for treating $y$ as a control variable is {weaker}.
In the case where $\phi, h$ are strictly differentiable, all conclusions except the Weierstrass condition are the same. The Weierstrass condition for treating $y$ as control is stronger since it implies the one for treating $y$ as a state variable. In summary, treating $y$ as control gives stronger necessary optimality conditions under {weaker} constraint qualifications. But this is not surprising since
treating $y$ as state variables requiring $y$ to be absolutely continuous while treating $y$ as control only requires $y$ to be weaker, i.e., only measurable.

\section{Discussion of constraint qualifications}
In this session  we discuss  sufficient conditions for constraint qualifications required in Theorems \ref{thm4.1} and \ref{thm5.1} to hold. The sufficient conditions for constraint qualifications required in  other necessary optimality conditions are similar.

We first discuss   sufficient conditions for constraint qualifications for Theorem \ref{thm4.1} to hold. The constraint qualifications involve the WBCQ plus the calmness of  the set-valued map $M_\varphi$ defined as in (\ref{perturb4.1n}).

It is easy to check that the calmness condition of $M_\varphi$ at $(0, \bar x, \bar u, \bar v)$ holds if
and only if the system defining the set $M_\varphi(0)$ has a local error bound at $(\bar x, \bar u,\bar v)$ (see e.g. \cite{hen}).
There are many sufficient conditions under which the local error bound holds (see e.g. Wu and
Ye \cite{Wu-Ye01,Wu-Yemp,Wu-Ye03}). However {not} many of them are easy to verify. Two easiest  criteria for checking   the calmness of the set-valued map $M_\varphi$ {are} the linear CQ and and NNAMCQ as defined in Proposition \ref{Prop6.2}(i)(iv) respectively.
Although the linear CQ and NNAMCQ are easy to verify, they may be still too strong for some problems to hold. Recently some new constraint qualifications that are stronger than calmness and
weaker than the linear CQ and/or NNAMCQ for  nonlinear
programs
have been introduced in the literature (see e.g. {\cite{abdreabi-haeser--,abdeabi--twoCQ,guoyezhang,helmut,helmutYe}).
%
%
For convenience, we summarize some prominent  verifiable sufficient conditions for the WBCQ plus the calmness of $M_\varphi$ as follows.
\begin{proposition}\label{Prop6.2}   Let $(\bar x,\bar{u},\bar{v})\in M_\varphi(0)$, $\varphi$ is Lipschitz continuous at $(\bar x,\bar u,\bar v)$ and $U, K_\varphi$ are closed.
Then the WBCQ
\begin{equation}
\begin{array}{l} {\lambda\in N_{K_\varphi}(\varphi(\bar x,\bar u,\bar v)),} (\alpha,0,0)\in \partial\langle \lambda, \varphi \rangle(\bar x,\bar u,\bar v)+\{0\}\times N_{U}(\bar u)\times \{0\}
\end{array} \Longrightarrow \alpha=0 \label{WBCQ1newn}
\end{equation}  and  the set-valued map $M_\varphi$ defined as in (\ref{perturb4.1n}) is calm at $(0,\bar x,\bar{u},\bar v)$ if one of
the following conditions holds:
\begin{itemize}
\item[\rm (i)] The WBCQ (\ref{WBCQ1newn}) and the  linear constraint qualification (Linear CQ) holds: $\varphi$ is affine and $U, K_\varphi$ are the union of finitely many polyhedral sets.
\item[\rm (ii)]  The CCQ  holds at $(\bar x, \bar u,\bar v)$: there exists $\mu >0$ such that
\begin{eqnarray*}
&& \lambda\in N_{K_\varphi}(\varphi(\bar x,\bar u,\bar v)), (\alpha,\beta,\gamma)\in \partial \langle \lambda, \varphi\rangle {(\bar{x},\bar{u}, \bar{v})}+\{0\}\times N_{U} (\bar{u})\times \{0\}\\
&& \qquad \qquad  \Longrightarrow |\lambda|\leq \mu |(\beta,\gamma)|.
\end{eqnarray*}
\item[\rm (iii)]  The MFC  holds at $(\bar x, \bar u,\bar v)$: \begin{eqnarray*}
\lambda\in N_{K_\varphi}(\varphi(\bar x,\bar u,\bar v)), (\alpha,0,0)\in \partial \langle \lambda, \varphi\rangle(\bar{x},\bar{u}, \bar{v})+\{0\}\times N_{U} (\bar{u})\times \{0\}
 \Longrightarrow \lambda=0.
\end{eqnarray*}
\item[\rm (iv)]    The NNAMCQ   holds at $(\bar x, \bar u,\bar v)$:
\begin{eqnarray*}
\lambda\in N_{K_\varphi}(\varphi(\bar x,\bar u,\bar v)), (0,0,0)\in \partial \langle \lambda, \varphi\rangle(\bar{x},\bar{u}, \bar{v})+\{0\}\times N_{U} (\bar{u})\times \{0\}
 \Longrightarrow \lambda=0.
\end{eqnarray*}
\item[\rm (v)] The WBCQ (\ref{WBCQ1newn}) and the  Quasinormality holds at $(\bar{x},\bar{u},\bar v)$:
\begin{eqnarray*}
&& \left \{ \begin{array}{l}
(0,0,0)\in \partial\langle \lambda, \varphi\rangle(\bar{x},\bar{u},\bar v)+\{0\}\times N_{U}(\bar{u})\times \{0\},\,\,
\lambda\in N_{K_\varphi}(\varphi(\bar{x},\bar{u},\bar v)),\\
\exists  (x^k,u^k,v^k,y^k,\lambda^k)\xrightarrow{\mathbb{R}^{n_{x}}\times U \times \mathbb{R}^{n_{x}} \times K_\varphi\times \mathbb{R}^{m}} (\bar{x},\bar{u},\bar v,\varphi(\bar{x},\bar{u},\bar v),\lambda)\\
 \mbox{ such that for each }  k,  \lambda_i\neq 0 \Longrightarrow  \lambda_i (\varphi_i(x^k,u^k,v^k)-y_i^k) >0
\end{array} \right \}
\Longrightarrow \lambda=0.
\end{eqnarray*}
\item[\rm (vi)]  The WBCQ (\ref{WBCQ1newn}) and the first order sufficient condition for metric subregularity (FOSCMS) at $(\bar{x},\bar{u},\bar v)$: $\varphi$ is   differentiable at $(\bar{x},\bar{u},\bar v)$, and for every $0\neq d:=(d_1,d_2,d_3)\in \mathbb{R}^{n_{x}}\times \mathbb{R}^{n_{u}}\times \mathbb{R}^{n_{x}}$ with $\nabla\varphi(\bar{x},\bar{u},\bar v)d\in T_{K_\varphi}(\varphi(\bar{x},\bar{u},\bar v)), d_2\in  T_{U}(\bar{u})$ one has
    \begin{eqnarray*}
\left \{ \begin{array}{l}
{(0,0,0)}\in \nabla\varphi(\bar{x},\bar{u},\bar v)^{T}\lambda+\{0\}\times {N_{U}^{L}(\bar{u};d_2)}\times \{0\},\\
\lambda\in N_{K_\varphi}(\varphi(\bar{x},\bar{u},\bar v);\nabla\varphi(\bar{x},\bar{u},\bar v)d)
\end{array} \right \}  \hspace{-0.3cm}&& \Longrightarrow\lambda=0.
\end{eqnarray*}
\item[\rm (vii)] The WBCQ (\ref{WBCQ1newn}) and the second order sufficient condition for metric subregularity (SOSCMS) at $(\bar{x},\bar{u},\bar v)$: $\varphi$ is twice
 Fr\'{e}chet differentiable at $(\bar{x},\bar{u},\bar v)$ and $K_\varphi, U$ are the union of finitely many convex polyhedra sets, and for every $0\neq d:=(d_1,d_2,d_3)\in \mathbb{R}^{n_{x}}\times \mathbb{R}^{n_{u}}\times \mathbb{R}^{n_{x}}$ with $\nabla\varphi(\bar{x},\bar{u},\bar v)d\in T_{K_\varphi}(\varphi(\bar{x},\bar{u},\bar v)), d_2\in  T_{U}(\bar{u})$ one has
    \begin{eqnarray*}
\left \{ \begin{array}{l}
{(0,0,0)}\in \nabla\varphi(\bar{x},\bar{u},\bar v)^{T}\lambda+\{0\}\times {N_{U}^{L}(\bar{u};d_2)}\times {\{0\}},\,\\
\lambda\in N_{K_\varphi}(\varphi(\bar{x},\bar{u},\bar v);\nabla\varphi(\bar{x},\bar{u},\bar v)d),\\
d^{T}\nabla^{2}\langle\lambda,\varphi\rangle(\bar{x},\bar{u},\bar v)d\geq 0 \
\end{array} \right\}  \hspace{-0.6cm}&& \Longrightarrow\lambda=0.
\end{eqnarray*}
\item[\rm (viii)] The WBCQ (\ref{WBCQ1newn}) and {the relaxed constant positive linear dependence (RCPLD) } holds at $(\bar{x},\bar{u},\bar{v})$: $\varphi$
 is differentiable at $(\bar{x},\bar{u},\bar{v})$, $U=\mathbb{R}^{n_{u}}$, $K_\varphi=\mathbb{R}^{m_1}\times \mathbb{R}^{m-m_1}_+$,
${\cal J}\subseteq\{1,\cdots, m_1\}$
is such that $\{\nabla \varphi_j(\bar{x},\bar{u},\bar{v})\}_{j\in{\cal J}}$ is a basis for the $
span\{\nabla \varphi_j(\bar{x},\bar{u},\bar{v})\}_{j=1}^{m_1}$ and
 there exists
$\delta>0$ such that
\begin{itemize}
\item $\{\nabla \varphi_j({x},{u},{v})\}_{j=1}^{m_1}$ has the same rank for each $(x,u,v)\in B((\bar x, \bar u,\bar v),\delta)$;
\item For every ${\cal I}\subseteq I(\bar x,\bar u,\bar v):=\{i\in \{m_1+1,\dots, m\}: \varphi_i(\bar x,\bar u,\bar v)=0\}$,  if there exists $\{\lambda_j\}_{ {\cal J\cup I}}$ with $j\geq 0 \ \, \forall j\in {\cal I}$ not all zero such that
 \begin{eqnarray*}
\sum\limits_{j\in{\cal J\cup  I}}\lambda_{j}\nabla \varphi_{j}(\bar{x},\bar u, \bar{v})=0,
\end{eqnarray*}
then  $\{\nabla \varphi_j(x,u,v)\}_{ j\in {\cal J \cup I}}$ is linearly dependent
for each $(x,u,v)\in {B}((\bar{x},\bar{u},\bar{v}),\delta)$.
\end{itemize}
\end{itemize}

\end{proposition}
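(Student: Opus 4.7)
My plan is to verify, separately for each of the eight conditions, both halves of the conclusion: (a) the WBCQ \eqref{WBCQ1newn} and (b) the calmness of $M_\varphi$ at $(0,\bar x,\bar u,\bar v)$. Throughout, I would use the basic equivalence that calmness of $M_\varphi$ at $(0,\bar z)$ with $\bar z=(\bar x,\bar u,\bar v)$ is equivalent to the metric subregularity of the multifunction $z\mapsto (\varphi(z)-K_\varphi,\, u-U)$ at $(\bar z,0)$, i.e., to the existence of a local error bound for the system $\varphi(z)\in K_\varphi$, $u\in U$ at $\bar z$. Organizing the eight cases according to whether the WBCQ is hypothesized or forced, and then invoking in each case a published sufficient condition for calmness/metric subregularity, is the main organizing idea.

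Cases (i)--(iv) reduce to classical observations. In (i), since $\varphi$ is affine and $U,K_\varphi$ are finite unions of polyhedra, $M_\varphi$ is a polyhedral multifunction in the sense of Robinson \cite{robin}, hence upper-Lipschitz at every $\Theta$ and in particular calm at $0$; the WBCQ is assumed. In (ii), CCQ with $\beta=\gamma=0$ yields $\lambda=0$, which in turn forces $\alpha=0$, so CCQ implies both MFC and WBCQ. In (iii), MFC implies WBCQ directly because the conclusion $\lambda=0$ reduces the WBCQ inclusion to $(\alpha,0,0)\in\{0\}\times N_U(\bar u)\times\{0\}$. Both CCQ and MFC imply NNAMCQ as a special case. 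In (iv), NNAMCQ is exactly the Mordukhovich coderivative criterion guaranteeing that $M_\varphi$ has the Aubin property around $(0,\bar z)$, which implies calmness; the WBCQ then follows because the Aubin property forces the singular coderivative to be trivial and rules out a nonzero $\alpha$ in the WBCQ inclusion.

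Cases (v)--(viii) all assume the WBCQ as part of the hypothesis, so only calmness remains to be established. For (v), quasinormality is a classical sufficient condition for a local error bound; see e.g.\ \cite{guoyezhang}. For (vi) and (vii), FOSCMS and SOSCMS were introduced by Gfrerer \cite{helmut13,helmut}, who established directly that each implies metric subregularity of the constraint mapping at $(\bar z,0)$. For (viii), Andreani and his collaborators proved that RCPLD implies the existence of a local error bound for smooth systems with equality and inequality constraints \cite{abdreabi-haeser--,abdeabi--twoCQ}; applied to the representation $\varphi_j(z)=0$ for $j\leq m_1$ and $\varphi_j(z)\leq 0$ for $j>m_1$ with $U=\mathbb{R}^{n_u}$, this gives the calmness of $M_\varphi$.

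The main obstacle is more bookkeeping than conceptual: each cited source operates in a slightly different framework (polyhedral mappings for Robinson, coderivative criteria for Mordukhovich, directional limiting normal cones and second-order derivatives for Gfrerer, rank conditions for RCPLD), so one has to translate each criterion into the joint constraint $\varphi(z)\in K_\varphi$, $u\in U$ using product/sum calculus for the limiting normal cone and subdifferential. The only nontrivial technical step is verifying in case (iv) that NNAMCQ really does yield the WBCQ, which I would handle by observing that under NNAMCQ the coderivative of $M_\varphi^{-1}$ at $(\bar z,0)$ maps only $0$ to tuples of the form $(\alpha,0,0)$, forcing $\alpha=0$ via standard coderivative calculus. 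Once these translations are in place, each of the eight implications is a direct appeal to the corresponding cited result.
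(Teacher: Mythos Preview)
Your plan is essentially the paper's own: treat the eight items case by case, use Robinson's polyhedrality result for (i), route (ii)--(iv) through the chain CCQ $\Rightarrow$ MFC $\Rightarrow$ NNAMCQ and invoke a standard calmness criterion, cite the quasinormality error-bound result for (v), Gfrerer's directional criteria for (vi)--(vii), and RCPLD for (viii). The only structural difference is that for calmness in (ii)--(iv) the paper goes via NNAMCQ $\Rightarrow$ quasinormality $\Rightarrow$ calmness (citing \cite{guoyezhang}), while you appeal to the Mordukhovich coderivative criterion for the Aubin property; both paths are standard and either one is acceptable. For (vi)--(vii) the paper reformulates the system via $q(x,u,v)=(\varphi(x,u,v),u)\in K_\varphi\times U$ and applies \cite[Corollary~1]{helmut2} together with a product rule for directional normal cones from \cite{YeZhou}; your direct appeal to \cite{helmut,helmut13} hides exactly this translation, so you should make the product decomposition explicit.

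There is, however, a real gap in your handling of (iv), and it is one the paper's own proof shares. You claim that NNAMCQ forces the WBCQ because ``the Aubin property forces the singular coderivative to be trivial and rules out a nonzero $\alpha$,'' and later that under NNAMCQ the coderivative of $M_\varphi^{-1}$ ``maps only $0$ to tuples of the form $(\alpha,0,0)$.'' That last statement is not NNAMCQ but MFC: NNAMCQ is equivalent to $0\in D^*M_\varphi^{-1}(\bar z,0)(\eta)\Rightarrow\eta=0$, which says nothing about preimages of the form $(\alpha,0,0)$ with $\alpha\neq 0$. In fact NNAMCQ does \emph{not} imply the WBCQ in general. Take $n_x=m=1$, $U=\mathbb{R}^{n_u}$, $K_\varphi=\{0\}$ and $\varphi(x,u,v)=x$; then $\partial\langle\lambda,\varphi\rangle(\bar z)=\{(\lambda,0,0)\}$ and $N_U(\bar u)=\{0\}$, so NNAMCQ reads $(0,0,0)=(\lambda,0,0)\Rightarrow\lambda=0$ (trivially true), while the WBCQ reads $(\alpha,0,0)=(\lambda,0,0)\Rightarrow\alpha=0$, which fails for any $\lambda\neq 0$. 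This is consistent with the equivalence MFC $\Leftrightarrow$ WBCQ $+$ NNAMCQ recorded after Definition~\ref{defn2.5}: if NNAMCQ alone implied WBCQ, it would coincide with MFC. The paper's proof simply asserts the chain ``NNAMCQ $\Rightarrow$ WBCQ'' without argument, so your flagged ``only nontrivial technical step'' is not merely nontrivial---as written, item (iv) cannot be proved, and the WBCQ must be added as an explicit hypothesis there (as it is in (i), (v)--(viii)).
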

\begin{proof} (i) Under Linear CQ, the set-valued map $M_\varphi$ is a polyhedral multifunction and hence upper Lipschitz continuous as shown by Robinson \cite{robin1}. The results follows from the fact that the upper Lipschitz continuity implies the calmness.

(ii)-(v)  By definition, it is easy to see that
$$\mbox{CCQ} \Rightarrow \mbox{MFC} \Rightarrow \mbox{NNAMCQ} \Rightarrow \mbox{WBCQ} \mbox{ and }  \mbox{NNAMCQ}  \Longrightarrow \mbox{Quasinormality}.$$
By  \cite[Theorem 5.2]{guoyezhang}, the quasinomality implies the calmness.

(vi)  Let $q(x,u):=(\varphi(x,u,v),u)\in \Gamma:=\Omega\times U$.  Note that the calmness of the set-valued map $M_\varphi(\cdot)$ at $(0, \bar x, \bar u,\bar v)$ is equivalent to the metric subregularity of the set-valued map $\Sigma(x,u,v):=q(x,u,v) -\Gamma$ at $(\bar x,\bar u, \bar v,0)$. By  \cite[1. of Corollary 1]{helmut2}, it suffices to show that for every $0\not =w  $ with $\nabla q(\bar{x},\bar{u},\bar v)w\in T_\Gamma (q(\bar{x},\bar{u},\bar v))$ one has
$$\nabla q(\bar{x},\bar{u},\bar v)^T \eta=0, \eta \in N_{\Gamma}(q(\bar{x},\bar{u},\bar v);\nabla q(\bar{x},\bar{u},\bar v)w) \Longrightarrow \eta=0.$$
By \cite[Proposition 3.3]{YeZhou}, we have
\begin{eqnarray*}
&& T_\Gamma(q(\bar x,\bar u, \bar v)) \subseteq T_{K_\varphi} (\Phi(\bar x,\bar u,\bar v)) \times T_U(\bar u),\\
&& N_\Gamma (q(\bar x,\bar u,\bar v);\nabla q(\bar{x},\bar{u},\bar v)u ) \subseteq N_\Omega(\varphi(\bar x,\bar u,\bar v); \nabla \varphi(\bar{x},\bar{u},\bar v)u) \times N_U(\bar u; d_2),
\end{eqnarray*}
and the equality holds if at most one of the sets $K_\varphi, U $ is directionally regular.
Hence the FOSCMS defined as in (vi)  is stronger than the condition required above and  the calmness holds.

(vii)  By the same arguments as above, we can verify that the SOSCMS  satisfies the condition of \cite[2. of Corollary 1]{helmut2}. So the result holds.

(viii)  follows from \cite[Theorem 4.2]{guo-zhang}.
\end{proof}

Now we discuss sufficient conditions for  constraint qualifications for Theorem \ref{thm5.1}  to hold. The constraint qualifications involve the WBCQ plus the calmness of  the set-valued map $M_h$ defined as in (\ref{Mh}) where we treat $y$ as a control. The proof of the results  are similar to Proposition \ref{Prop6.2} and hence we omit it.
\begin{proposition}\label{Prop6.3}   Let $(\bar x,\bar{y},\bar{u})\in M_h(0)$, $h$ is Lipschitz continuous at $(\bar x, \bar y, \bar u)$ and $U$ is closed.
Then  the WBCQ
\begin{equation}  \lambda\in \mathbb{R}^{n_y},  (\alpha,0,0)\in \partial \langle \lambda, h\rangle(\bar x,\bar y,\bar u)+\{(0,0)\}\times N_U (\bar u)
\Longrightarrow \alpha=0\label{WBCQ2}
\end{equation} and
the set-valued mapping $M_h$  defined as in (\ref{Mh}) is calm at $(0,\bar{x},\bar{y}, \bar{u})$ if one of
the following conditions holds:
\begin{itemize}
\item[\rm (i)] The  WBCQ (\ref{WBCQ2}) and  the linear constraint qualification (Linear CQ) holds: $h$ is affine and $U$ is  the union of finitely many polyhedral sets.
\item[\rm (ii)]  The CCQ holds at $(\bar x, \bar y, \bar u)$:  there exists $\mu >0$ such that
\begin{eqnarray*}
\lambda\in \mathbb{R}^{n_y}, (\alpha,\beta,\gamma)\in \partial \langle \lambda, h\rangle(\bar{x},\bar{y}, \bar{u})+\{(0,0)\}\times N_{U} (\bar{u})
 \Longrightarrow |\lambda|\leq \mu |(\beta,\gamma)|.
\end{eqnarray*}
\item[\rm (iii)]  The MFC holds at $(\bar x, \bar y,\bar u)$:
\begin{eqnarray*}
\lambda\in \mathbb{R}^{n_y}, (\alpha,0,0)\in \partial \langle \lambda, h\rangle(\bar{x},\bar{y}, \bar{u})+\{(0,0)\}\times N_{U}(\bar{u})
 \Longrightarrow \lambda=0.
\end{eqnarray*}
\item[\rm (iv)]    The NNAMCQ  holds at $(\bar x, \bar y, \bar u)$:
\begin{eqnarray*}
\lambda\in \mathbb{R}^{n_y}, (0,0,0)\in \partial \langle \lambda, h\rangle(\bar{x},\bar{y}, \bar{u})+\{(0,0)\}\times N_{U}(\bar{u})
 \Longrightarrow \lambda=0.
\end{eqnarray*}
\item[\rm (v)] The  WBCQ (\ref{WBCQ2}) and  the quasinormality holds at $(\bar{x},\bar y, \bar{u})$:
\begin{eqnarray*}
&& \left \{ \begin{array}{l}
(0,0,0)\in \partial\langle \lambda, h\rangle(\bar{x}, \bar y, \bar{u})+\{(0,0)\}\times N_{U}(\bar{u}),\\
\exists  (x^k,y^k, u^k,\lambda^k)\xrightarrow{\mathbb{R}^{n_{x}}\times  \mathbb{R}^{n_{y}} \times U \times \mathbb{R}^{d}} (\bar{x}, \bar y, \bar{u}, \lambda)\\
 \mbox{ such that for each }  k,  \lambda_i\neq 0 \Longrightarrow  \lambda_i  h_i(x^k,y^k, u^k) >0
\end{array} \right \}
\Longrightarrow \lambda=0.
\end{eqnarray*}
\item[\rm (vi)] The WBCQ (\ref{WBCQ2}) and  the FOSCMS at $(\bar{x},\bar y, \bar{u})$: $h$ is   differentiable at $(\bar{x},\bar y, \bar{u})$, and for every $0\neq d:=(d_1,d_2)\in \mathbb{R}^{n_{x}+n_y}\times \mathbb{R}^{n_{u}}$ with $\nabla h(\bar{x},\bar y, \bar{u})d=0, d_2\in  T_{U}(\bar{u})$ one has
    \begin{eqnarray*}
\lambda\in \mathbb{R}^{n_y}, (0,0,0)\in \nabla h(\bar{x},\bar y, \bar{u})^{T}\lambda+\{(0,0)\}\times {N_{U}^{L}(\bar{u};d_2)}
 \Longrightarrow\lambda=0.
\end{eqnarray*}
\item[\rm (vii)] The WBCQ (\ref{WBCQ2}) and  SOSCMS at $(\bar{x},\bar y,\bar{u})$: $h$ is twice
 Fr\'{e}chet differentiable at $(\bar{x},\bar{u})$, $ U$ is the union of finitely many convex polyhedra sets, and for every $0\neq d:=(d_1,d_2)\in \mathbb{R}^{n_{x}+n_y}\times \mathbb{R}^{n_{u}}$ with
 $\nabla h(\bar{x},\bar y, \bar{u})d=0, d_2\in  T_{U}(\bar{u})$ one has
    \begin{eqnarray*}
\left \{ \begin{array}{l}
(0,0,0)\in \nabla h(\bar{x}, \bar{y}, \bar{u})^{T}\lambda+\{(0,0)\}\times {N_{U}^{L}(\bar{u};d_2)},\,\\
d^{T}\nabla^{2}\langle\lambda,h\rangle(\bar{x},\bar{y},\bar{u})d\geq0,\lambda\in \mathbb{R}^{n_y} \
\end{array} \right\}  \hspace{-0.6cm}&& \Longrightarrow\lambda=0.
\end{eqnarray*}
\item[\rm (viii)] The WBCQ (\ref{WBCQ2}) and {the constant rank constraint qualification  (CRCQ)} at $(\bar{x},\bar{y},\bar{u})$: suppose $h$ is differentiable around $(\bar{x},\bar{y},\bar{u})$ and  $U=\mathbb{R}^{n_{u}}$,
 there exists
$\delta>0$ such that
 $\{\nabla h_j(x,y,u)\}_{j=1}^{n_y}$ has the same rank for each $(x,y,u)\in {
B}((\bar{x},\bar{y},\bar{u}),\delta)$.

\end{itemize}

\end{proposition}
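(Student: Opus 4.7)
The plan is to mirror the proof of Proposition~\ref{Prop6.2} item by item, replacing $\varphi$ with $h$, taking $K_\varphi=\{0\}$, and remembering that here the control set $U$ acts only on the third block $u$ while $(x,y)$ plays the role analogous to $(x,v)$. For each item I need to check two things: that the stated condition forces the WBCQ (\ref{WBCQ2}), and that it implies calmness of $M_h$ at $(0,\bar x,\bar y,\bar u)$. Items (ii)--(iv) reduce to straightforward bookkeeping: CCQ $\Rightarrow$ MFC $\Rightarrow$ NNAMCQ $\Rightarrow$ WBCQ follows from inspection of the definitions, and NNAMCQ is a special case of the quasinormality in (v), so these three cases collapse into (v).

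For (i), when $h$ is affine and $U$ is a finite union of polyhedra, the graph of $M_h$ is a finite union of polyhedral sets, so Robinson's theorem \cite{robin1} gives that $M_h$ is a polyhedral, hence upper Lipschitz, multifunction, which implies calmness. The WBCQ is assumed directly. For (v), I would cite \cite[Theorem 5.2]{guoyezhang} to conclude that quasinormality of the system defining $M_h(0)$ yields calmness of $M_h$; the WBCQ is again assumed. For (viii), since $U=\mathbb{R}^{n_u}$ and there are no inequality constraints, the CRCQ is exactly the special case of the RCPLD assumption of Proposition~\ref{Prop6.2}(viii) with $m_1=n_y$ and empty index set $I(\bar x,\bar y,\bar u)$, so the calmness conclusion follows directly from \cite[Theorem 4.2]{guo-zhang}.

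The substantive work is in (vi) and (vii). Following the template used for Proposition~\ref{Prop6.2}, the calmness of $M_h$ is equivalent to the metric subregularity at $(\bar x,\bar y,\bar u,0)$ of the set-valued map
\[
\Sigma(x,y,u) := (h(x,y,u),u) - (\{0\}\times U).
\]
I would then invoke parts 1 and 2 of \cite[Corollary 1]{helmut2}, which reduce metric subregularity to a directional condition on $\Sigma$. To translate that condition into the form stated in (vi) and (vii), I would apply \cite[Proposition 3.3]{YeZhou} to decompose the directional limiting normal cone of $\{0\}\times U$ as $\mathbb{R}^{n_y}\times N_U^L(\bar u;d_2)$; this decomposition is exact since $\{0\}$ is trivially directionally regular. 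Substituting this into the FOSCMS (resp.\ SOSCMS) hypothesis yields precisely the directional (resp.\ second-order directional) criterion of \cite[Corollary 1]{helmut2}.

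The only delicate point I anticipate is making sure the directional normal cone calculus in (vi) and (vii) is applied in a form consistent with the tangent condition $\nabla h(\bar x,\bar y,\bar u)d=0$ and $d_2\in T_U(\bar u)$; otherwise the argument is a direct transcription of Proposition~\ref{Prop6.2}(vi)--(vii) with $\varphi$ replaced by $h$ and $K_\varphi$ replaced by $\{0\}$ (so the directional normal cone $N_{\{0\}}(0;\cdot)$ reduces to all of $\mathbb{R}^{n_y}$, matching the unrestricted multiplier $\lambda\in\mathbb{R}^{n_y}$ in the WBCQ (\ref{WBCQ2})). No step requires any new idea beyond what is already present in the proof of Proposition~\ref{Prop6.2}, which is why the statement is offered with the proof omitted.
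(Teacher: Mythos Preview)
Your proposal is correct and follows exactly the approach the paper intends: the paper omits the proof entirely, stating only that it is ``similar to Proposition~\ref{Prop6.2},'' and you have faithfully transcribed that argument with $\varphi$ replaced by $h$, $K_\varphi$ by $\{0\}$, and the variable ordering adjusted so that $u\in U$ occupies the last block. Your handling of each item---including the reduction of (viii) to the RCPLD case via \cite[Theorem 4.2]{guo-zhang} and the use of \cite[Corollary~1]{helmut2} together with \cite[Proposition 3.3]{YeZhou} for (vi)--(vii)---matches the template of Proposition~\ref{Prop6.2} exactly.
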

%

%
%
%

To compare with  \cite[Section 4]{pinho*},  next we consider a special case of  $(P_{DAE})$ with  $\varphi(x,u,v):=Ev-g(x,u)$ and $K_\varphi=\{0\}$
\begin{eqnarray*}
(P^{'}_{DAE})~~~\min &&  f(x(t_0),x(t_1))\nonumber \\
s.t. && E\dot{x}(t)-g(x(t), u(t))=0,\\
&& u(t) \in U \quad a.e. \,t\in [t_0,t_1],\nonumber\\
&& (x(t_0), x(t_1))\in S,\nonumber
\end{eqnarray*}
where $E$ is a $m\times n_{x}$ matrix with $rank(E)=r$, $g:\mathbb{R}^{n_{x}}\times \mathbb{R}^{n_{u}}\rightarrow \mathbb{R}^{m}$ is strictly differentiable.
 Depending on the rank of the matrix $E$, the following three
cases are considered in  \cite[Section 4]{pinho*}

{\bf Case (A)} $E$ is of full row rank;

{\bf Case (B)} $E$ is of full column rank;

{\bf Case (C)} $E$ is of neither of full row rank nor of column rank.

Note that  \cite{pinho*} allows for the dynamic system to be nonautonomous  but the matrix $E$ is required to have  some special forms. For those special matrix $E$, depending on the cases, de Pinho  \cite{pinho*} augmented the system and transform the original problem to the one that may be easier to analyze.

In case (A),  we obtain the following results as a corollary of Corollary \ref{thm5.1new}.
\begin{corollary} \label{thm6.1new} Let  $(x_*,u_*)$ be a  local minimum of radius $R(\cdot)$ for $(P^{'}_{DAE})$.
  Suppose that
  $E$ is of full row rank and that
   there exists $\delta>0$ such that $R(t)\geq \delta$.
 Suppose further that
 ${C}_\varphi^{\varepsilon,R}$ as defined in (\ref{C}) with $K_\varphi=\{0\}$  is compact.
Then there exist an arc $p$, a number $\lambda _{0}$ in $\{0,1\}$
and a measurable function
$\mu:[t_0,t_1]\rightarrow \mathbb{R}^{n_{u}}$ with $\mu(t)\in N_{U}^{C}(u_*(t))$ a.e.
satisfying the nontriviality condition
$(\lambda _{0},p(t))\neq0, \forall t\in[t_0,t_1]$,
 the transversality condition
$$(p(t_0),-p(t_1)) \in \lambda_0 \partial f(x_*(t_0),x_*(t_1))+N_S (x_*(t_0),x_*(t_1)),$$
and the Euler adjoint inclusion for almost every $t$:
\begin{eqnarray}\label{6.3}
\left \{ \begin{array}{l}
\dot{p}(t)=-\nabla_{x}g(x_*(t),u_*(t))^{T} (EE^{T})^{-1}Ep(t), \,\mbox {a.e.},\\
\mu(t)=\nabla_{u}g(x_*(t),u_*(t))^{T} (EE^{T})^{-1}Ep(t),  \,\mbox {a.e.},
\end{array} \right.
\end{eqnarray}
as well as the Weierstrass condition of radius $R(\cdot)$ for almost every $t$:\begin{eqnarray*}
{ u\in U, Ev=g(x_*(t), u), |(u,v)-(u_*(t),\dot{x}_*(t))| < R(t)\Longrightarrow }
 \langle p(t),v-\dot{x}_*(t) \rangle\leq 0.
\end{eqnarray*}
In the case of free end point, $\lambda_0$ can be taken as $1$.
\end{corollary}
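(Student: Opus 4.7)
The plan is to derive Corollary \ref{thm6.1new} as a direct specialization of Corollary \ref{thm5.1new} applied to the implicit-system reformulation with $\varphi(x,u,v):=Ev-g(x,u)$, $K_\varphi=\{0\}$, and $F\equiv 0$. Since $\varphi$ is strictly differentiable (as $g$ is strictly differentiable and $Ev$ is linear), all hypotheses on smoothness in Corollary \ref{thm5.1new} are met; the only thing that needs verification is the constraint qualification, namely the WBCQ together with calmness of $M_\varphi$ at every point of $C_\varphi^{\varepsilon,R}$.

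First I would check that the full row rank of $E$ yields even the NNAMCQ of Proposition \ref{Prop6.2}(iv) at every admissible triple. Indeed, $\partial\langle\lambda,\varphi\rangle(x,u,v)=\{(-\nabla_x g(x,u)^T\lambda,\,-\nabla_u g(x,u)^T\lambda,\,E^T\lambda)\}$, so an inclusion $(0,0,0)\in \partial\langle\lambda,\varphi\rangle(x,u,v)+\{0\}\times N_U(u)\times\{0\}$ forces $E^T\lambda=0$; full row rank of $E$ means $E^T$ is injective, so $\lambda=0$. Since NNAMCQ implies both the WBCQ and the calmness of $M_\varphi$, the hypotheses of Corollary \ref{thm5.1new} are satisfied.

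Next I would invoke Corollary \ref{thm5.1new} to obtain an arc $p$, a multiplier $\lambda_0\in\{0,1\}$, and measurable functions $\lambda_\varphi(\cdot)$ and $\mu(\cdot)$ with $\mu(t)\in N_U^C(u_*(t))$, together with the nontriviality and transversality conditions exactly as stated. The explicit Euler adjoint inclusion (using $F\equiv 0$) reads
\begin{eqnarray*}
(\dot p(t),-\mu(t),p(t))\in \nabla\varphi(x_*(t),u_*(t),\dot x_*(t))^T\lambda_\varphi(t),
\end{eqnarray*}
which, component by component, becomes
\begin{eqnarray*}
\dot p(t)=-\nabla_x g(x_*(t),u_*(t))^T\lambda_\varphi(t),\quad
\mu(t)=\nabla_u g(x_*(t),u_*(t))^T\lambda_\varphi(t),\quad
p(t)=E^T\lambda_\varphi(t).
\end{eqnarray*}
Because $E$ has full row rank, $EE^T$ is invertible. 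Multiplying the last equation on the left by $E$ and inverting gives $\lambda_\varphi(t)=(EE^T)^{-1}Ep(t)$, and substituting this into the first two equations yields precisely (\ref{6.3}).

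For the Weierstrass condition, setting $\lambda_0 F\equiv 0$ and using $\varphi(x_*(t),u,v)=0\iff Ev=g(x_*(t),u)$ collapses the general Weierstrass inequality of Corollary \ref{thm5.1new} to $\langle p(t),v-\dot x_*(t)\rangle\le 0$, as claimed. Finally, free end point gives $\lambda_0=1$ by the last sentence of Corollary \ref{thm5.1new}. I expect no real obstacle: the only point requiring care is the passage from the implicit-multiplier identity $p(t)=E^T\lambda_\varphi(t)$ to the explicit formula $\lambda_\varphi(t)=(EE^T)^{-1}Ep(t)$, which is valid precisely because the case (A) hypothesis makes $EE^T$ invertible, and simultaneously guarantees both the CQ and the inversion step.
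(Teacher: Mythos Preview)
Your proposal is correct and follows essentially the same approach as the paper: verify NNAMCQ from the full row rank of $E=\nabla_v\varphi$ (hence WBCQ plus calmness via Proposition~\ref{Prop6.2}(iv)), apply Corollary~\ref{thm5.1new}, and then solve $p(t)=E^T\lambda_\varphi(t)$ for $\lambda_\varphi(t)=(EE^T)^{-1}Ep(t)$ to obtain (\ref{6.3}). Your write-up is slightly more explicit about the component-wise Euler inclusion and the collapse of the Weierstrass condition, but the argument is the same.
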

\begin{proof}
Since $\nabla_v\varphi =E$ is of full row rank,
 the NNAMCQ holds automatically at any feasible point. By Proposition \ref{Prop6.2}(iv), WBCQ plus the calmness of $M_{\varphi}$  holds.
Hence all the assumptions in Corollary \ref{thm5.1new} are satisfied.
By Corollary \ref{thm5.1new}, there exist an arc $p$, a number $\lambda_0\in \{0,1\}$, and measurable functions
$\lambda_{\varphi}:[t_0,t_1]\rightarrow \mathbb{R}^m$,
$\mu:[t_0,t_1]\rightarrow \mathbb{R}^{n_{u}}$ satisfying the nontriviality condition,  the transversality condition, the Euler adjoint inclusion  and the Weierstrass condition. We only need to prove the Euler adjoint inclusion (\ref{6.3}).
By the Euler adjoint inclusion in Corollary \ref{thm5.1new}, we have
$$p(t)=\nabla_v\varphi^T \lambda_\varphi=E^T \lambda_\varphi.$$  Since $E$ is of full row rank, we can solve $\lambda_\varphi= (EE^T)^{-1}Ep(t)$ from the above linear system and hence the proof is completed.
\end{proof}

If $E=\left(
        \begin{array}{cc}
          E_{a} & 0 \\
        \end{array}
      \right)
$ where $E_{a}$ is a $m\times m$ nonsingular matrix, the results obtained for the case (A) are the same as that of \cite[Corollary 4.1]{pinho*} but without requiring the restriction for the function $f$.

In case (B) and (C),  we obtain the following results as a corollary of Corollary \ref{thm5.1new}.
\begin{corollary} \label{thm6.2new} Let  $(x_*,u_*)$ be a  local minimum of radius $R(\cdot)$ for $(P_{DAE})$.
  Suppose that $E$ is not of full row rank but one of assumptions in Proposition \ref{Prop6.2}(i)(v)(vi)(vii)(viii) holds. Suppose further that
 ${C}_\varphi^{\varepsilon,R}$ as defined in (\ref{C}) with $K_\varphi=\{0\}$  is compact  and  there exists $\delta>0$ such that $R(t)\geq \delta$. Then there exist an arc $p$, a number $\lambda _{0}$ in $\{0,1\}$
and  measurable functions $\lambda_{\varphi}:[t_0,t_1]\rightarrow \mathbb{R}^m$,
$\mu:[t_0,t_1]\rightarrow \mathbb{R}^{n_{u}}$ with $\mu(t)\in N_{U}^{C}(u_*(t))$ a.e.
satisfying the nontriviality condition,
 the transversality condition, the Weierstrass condition as in Corollary \ref{thm6.1new}
and the Euler adjoint inclusion for almost every $t$:
\begin{eqnarray}\label{6.5}
\left \{ \begin{array}{l}
\dot{p}(t)=-\nabla_{x}g(x_*(t),u_*(t))^{T} \lambda_{\varphi}(t), \,\mbox {a.e.},\\
\mu(t)=\nabla_{u}g(x_*(t),u_*(t))^{T} \lambda_{\varphi}(t),  \,\mbox {a.e.},\\
p(t)=E^{T}\lambda_{\varphi}(t).\
\end{array} \right.
\end{eqnarray}
If $N_U^C(u_*(t))=\{0\}$, then  the estimate for the  multiplier $\lambda_{\varphi}(t)$  also holds:
$$
|\lambda_{\varphi}(t)|\leq k|p(t)| \qquad a.e.
$$
for some positive constant $k>0$.
In the case of free end point, $\lambda_0$ can be taken as $1$.
\end{corollary}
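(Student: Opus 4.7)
The plan is to view $(P'_{DAE})$ as a special case of $(P_{DAE})$ with $\varphi(x,u,v):=Ev-g(x,u)$, $K_\varphi=\{0\}$, running cost $F\equiv 0$, and then to apply Corollary \ref{thm5.1new}. To do so, the first task is to verify the constraint qualification required by Corollary \ref{thm5.1new}, namely the WBCQ plus calmness of $M_\varphi$ at every $(0,x,u,v)\in C_\varphi^{\varepsilon,R}\times\{0\}$. Since $\varphi$ is strictly differentiable (affine in $v$, smooth in $(x,u)$ by hypothesis on $g$), Proposition \ref{Prop6.2} provides exactly the sufficient conditions (i), (v), (vi), (vii), (viii) that are imposed here, so the verification amounts to invoking Proposition \ref{Prop6.2} pointwise. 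This parallels Corollary \ref{thm6.1new}, where full row rank of $E$ made the NNAMCQ automatic via Proposition \ref{Prop6.2}(iv); here, because $E$ is rank-deficient, NNAMCQ is no longer free, and the alternative sufficient conditions have to do the work.

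With the constraint qualification in place, I would invoke the strictly-differentiable conclusion of Corollary \ref{thm5.1new} to obtain an arc $p$, a scalar $\lambda_0\in\{0,1\}$, and measurable $\lambda_\varphi,\mu$ (with $\mu(t)\in N_U^C(u_*(t))$ a.e.) satisfying the nontriviality and transversality conditions, together with the explicit Euler inclusion
\[
(\dot p(t),-\mu(t),p(t))\in \lambda_0\partial^C F(x_*(t),u_*(t),\dot x_*(t))+\nabla\varphi(x_*(t),u_*(t),\dot x_*(t))^T\lambda_\varphi(t).
\]
Then I would compute $\nabla\varphi=(-\nabla_x g,-\nabla_u g,E)$ and use $F\equiv 0$ to peel off the three components of the inclusion, yielding exactly (\ref{6.5}). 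The Weierstrass condition of Corollary \ref{thm5.1new} reduces in the present setting (with $F=0$ and the constraint $Ev=g(x_*(t),u)$) to $\langle p(t),v-\dot x_*(t)\rangle\le 0$, which is the stated Weierstrass condition.

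For the multiplier estimate, I would invoke the final part of Corollary \ref{thm5.1new}: since $K_\varphi=\{0\}$ is trivially normally regular, $\varphi$ is strictly differentiable, and $N_U^C(u_*(t))=\{0\}$, the corollary delivers $|\lambda_\varphi(t)|\le\kappa\{k|p(t)|+\lambda_0 k^F\}$ a.e. Because $F\equiv 0$ we can take $k^F=0$, so the estimate collapses to $|\lambda_\varphi(t)|\le k|p(t)|$ a.e. after absorbing $\kappa$ into the constant. The free-endpoint assertion that $\lambda_0=1$ is inherited directly from Corollary \ref{thm5.1new}.

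The only genuinely delicate step I anticipate is the constraint qualification verification: each of (i), (v), (vi), (vii), (viii) of Proposition \ref{Prop6.2} has to be translated from its general form into the specific structure $\varphi=Ev-g(x,u)$, and in particular one must check that the WBCQ (\ref{WBCQ1newn}) is implied by the hypothesis. This is essentially routine because $\nabla_v\varphi=E$ is constant and $K_\varphi=\{0\}$ means $\lambda$ ranges freely over $\mathbb{R}^m$, but it is the one place where the rank-deficiency of $E$ could, in principle, cause the WBCQ to fail. Once that verification is in hand, the rest of the proof is a direct specialization of Corollary \ref{thm5.1new}.
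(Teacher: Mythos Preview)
Your proposal is correct and is exactly the argument the paper has in mind; indeed the paper gives no explicit proof of this corollary, leaving it as an immediate consequence of Corollary \ref{thm5.1new} together with Proposition \ref{Prop6.2}, which is precisely your route. One small clarification: the step you flag as ``delicate'' is not actually delicate, because in each of Proposition \ref{Prop6.2}(i),(v),(vi),(vii),(viii) the WBCQ (\ref{WBCQ1newn}) is part of the hypothesis rather than a conclusion to be derived, so once the corollary assumes one of those items, the WBCQ is already in hand and Proposition \ref{Prop6.2} simply delivers the calmness of $M_\varphi$.
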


\section*{Acknowledgments} We thank the anonymous reviewers of this paper for valuable  comments that helped us to improve the presentation of the manuscript.

\section*{Appendix: Proof of Theorem \ref{thm4.2new}}

Our proof is based on the following result.

For convenience, we first recall the following result from \cite{anli}.
For any given $\varepsilon>0$ and a given radius function $R(t)$, define
$$S_*^{\varepsilon,R}(t):=\{(x,u)\in \bar{B}(x_*(t),\varepsilon)\times U:  \Phi(x,u) \in \Omega , |\phi(x,u)-\dot{x}_*(t)|\leq R(t)\},$$
$$C_*^{\varepsilon,R}=cl\{(t,x,v)\in [t_0,t_1]\times \mathbb{R}^{n_x}\times \mathbb{R}^{n_x}: v=\phi(x,u),(x,u)\in S_*^{\varepsilon,R}(t)\},$$
where $cl$ denotes the closure.

\begin{proposition}\cite[Theorem 4.2]{anli}\label{Propnew}
 Let  $(x_*,u_*)$ be a $W^{1,1}$ local minimum of radius $R(\cdot)$ for $(P)$  in the sense that
 that $(x_{*},u_{*})$  minimizes  $J(x, u)$ over all admissible pairs $(x,u)$ which satisfies both $| x(t)-x_{*}(t)|\leq \varepsilon$, $|\dot{x}(t)-\dot{x}_*(t)|\leq R(t) \mbox{ a.e.}$ and
$\int_{t_0}^{t_1}|\dot{x}(t)-\dot{x}_*(t)|dt\leq \varepsilon$.  Suppose {that}  there exists $\delta>0$ such that $R(t)\geq \delta$.
 Moreover suppose that $C_*^{\varepsilon, R}$ is compact and that for  all  $(t,x,u)$ with $(t,x, \phi(x,u))\in C_*^{\varepsilon,R}$, the WBCQ holds:
\begin{eqnarray*}
\left \{ \begin{array}{l}  (\alpha,0)\in \partial \langle \lambda, \Phi\rangle(x,u)+\{0\}\times N_{U}(u)\\
\lambda\in N_{\Omega}^{L}(\Phi(x,u))\end{array} \right. \Longrightarrow \alpha=0
\end{eqnarray*}
and the mapping $M$ defined as in (\ref{perturb3.1}) is calm at $(0, x,u)$.
Then the transversality condition, the Euler adjoint inclusion in Theorem \ref{thm4.2new} hold and the Weierstrass condition of radius $R(\cdot)$ holds for almost every $t$: \begin{eqnarray*}
&&\Phi(x_*(t),u)\in \Omega, u\in U, \, \,  |\phi( x_*(t), u)-\phi( x_*(t),u_*(t))|< R(t)\Longrightarrow\\
&&\langle p(t),\phi(x_*(t),u) \rangle-\lambda_{0}F(x_*(t),u)\leq \langle p(t), \phi (x_*(t),u_*(t)) \rangle -\lambda_{0}F(x_*(t),u_*(t)).
\end{eqnarray*}
\end{proposition}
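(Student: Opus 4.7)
My plan is to establish Proposition \ref{Propnew} by reducing the mixed-constrained optimal control problem $(P)$ to a state-constrained differential inclusion and then invoking known necessary optimality conditions for differential inclusions. Specifically, I would introduce the velocity set-valued map
\[
F(x):=\{\phi(x,u): u\in U,\ \Phi(x,u)\in\Omega\},
\]
so that any admissible pair $(x,u)$ gives a trajectory of $\dot{x}(t)\in F(x(t))$, and $(x_*,u_*)$ corresponds to a $W^{1,1}$-local minimizer of the induced differential inclusion problem with cost written in velocity form. To make the cost a pure Mayer term one can, as is standard, append the running cost to an auxiliary state $x^{0}(t)$ with $\dot{x}^0=F(x,u)$; this is a routine reformulation so I will not dwell on it.

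The first and most important technical step is to show that, under the WBCQ plus calmness of $M$ along $(x_*(t),u_*(t))$, the truncated velocity map
\[
F_R(x):=\{\phi(x,u): u\in U,\ \Phi(x,u)\in\Omega,\ |\phi(x,u)-\dot{x}_*(t)|\le R(t)\}
\]
is pseudo-Lipschitz (Aubin property) around $(x_*(t),\dot{x}_*(t))$ uniformly in $t$, with a Lipschitz modulus $k$ controlled by the compactness of $C_*^{\varepsilon,R}$. This is where the calmness assumption enters essentially: calmness of $M$ at $(0,x_*(t),u_*(t))$ is equivalent to metric subregularity of $(x,u)\mapsto \Phi(x,u)-\Omega$ at that point, which by Gfrerer's result (Proposition \ref{Prop3.2}) provides the estimate
\[
N_{M(0)}(x_*(t),u_*(t))\subseteq\{(\alpha,\beta): \exists\,\lambda\in N_\Omega(\Phi(x_*(t),u_*(t)))\text{ with }(\alpha,\beta)\in\partial\langle\lambda,\Phi\rangle(x_*(t),u_*(t))+\{0\}\times N_U(u_*(t))\}.
\]
Combining this with the WBCQ, which forces such $(\alpha,0)$ to have $\alpha=0$, and using the Mordukhovich criterion for the Aubin property of the implicitly defined multifunction $(x,\nu)\mapsto\{u:\Phi(x,u)+\nu\in\Omega, u\in U\}$, one deduces pseudo-Lipschitzness of $F_R$ together with the bound $|\dot p(t)|\le k|p(t)|$ used later.

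Once $F_R$ is pseudo-Lipschitz, I would apply the Clarke--Vinter style Hamiltonian necessary conditions for $W^{1,1}$-local minimizers of differential inclusion problems (see \cite{clsw,c,cr}), yielding an arc $p$ and $\lambda_0\in\{0,1\}$ with the nontriviality and transversality conditions, the Euler adjoint inclusion
\[
\dot p(t)\in \operatorname{co}\{\alpha: (\alpha,p(t))\in N_{\operatorname{gph}F_R}(x_*(t),\dot{x}_*(t))\}+\lambda_0\partial^C_x\!\,\text{(running cost)},
\]
and the Hamiltonian maximization over $F_R(x_*(t))$, i.e., the Weierstrass inequality on $\{v:|v-\dot x_*(t)|<R(t)\}$ which after resubstitution $v=\phi(x_*(t),u)$ gives exactly the stated Weierstrass condition.

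The final step is translation: I would expand $N_{\operatorname{gph}F_R}$ using the calculus rule for graphs of composite multifunctions. Writing $\operatorname{gph}F_R$ as the image of the constraint set $\{(x,u,v): v=\phi(x,u),\ \Phi(x,u)\in\Omega,\ u\in U\}$ under projection, and applying the chain rule for limiting normals under the metric-subregularity/calmness qualification, one obtains that the $(\alpha,\beta)\in N_{\operatorname{gph}F_R}$ have the form
\[
(\alpha,\beta)\in\partial\langle -\beta,\phi\rangle(x_*(t),u_*(t))+\partial\langle\lambda,\Phi\rangle(x_*(t),u_*(t))+\{0\}\times N_U(u_*(t)),
\]
for some $\lambda\in N_\Omega(\Phi(x_*(t),u_*(t)))$, after which taking $\beta=p(t)$ produces the stated Euler adjoint inclusion in the form of Theorem \ref{thm4.2new}. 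The main obstacle is the uniform pseudo-Lipschitz step and the accompanying coderivative chain rule: it requires that the WBCQ--plus--calmness hypothesis be verified \emph{along the whole tube} $C_*^{\varepsilon,R}$ (not just at $(x_*(t),u_*(t))$) so that the Aubin modulus and the subregularity modulus can be taken uniform in $t$ by the compactness of $C_*^{\varepsilon,R}$; every other piece is a standard application of normal-cone calculus plus the differential-inclusion maximum principle.
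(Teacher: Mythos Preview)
The paper does not itself prove Proposition~\ref{Propnew}: it is quoted verbatim from \cite[Theorem~4.2]{anli} and used as a black box in the Appendix to derive Theorem~\ref{thm4.2new}. The only information the present paper gives about the proof in \cite{anli} is the remark (inside the proof of Theorem~\ref{New}) that it ``is based on transforming the optimal control problem to a differential inclusion problem with a pseudo-Lipschitz set-valued map,'' from which the estimate $|\dot p(t)|\le k|p(t)|$ is inherited.

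Your outline is consistent with that description: you reduce $(P)$ to a differential inclusion via the velocity multifunction, establish the Aubin property of the (truncated) velocity map from WBCQ plus calmness, apply a differential-inclusion maximum principle, and translate back via normal-cone calculus. That is the correct architecture and matches what \cite{anli} does. Two places in your sketch are loose, however. First, your sentence invoking ``the Mordukhovich criterion for the Aubin property of the implicitly defined multifunction $(x,\nu)\mapsto\{u:\Phi(x,u)+\nu\in\Omega,\ u\in U\}$'' conflates two different maps: the Mordukhovich criterion must be applied to the velocity map $x\mapsto F_R(x)$, not to the implicit control map. The role of calmness is to supply the chain rule that computes $N_{\mathrm{gph}\,F_R}$ in terms of $\partial\langle\lambda,\Phi\rangle$, $\partial\langle\cdot,\phi\rangle$ and $N_U$; the role of WBCQ is then precisely to kill the $x$-component when the $v$-component vanishes, i.e.\ to verify $D^{*}F_R(x_*(t),\dot x_*(t))(0)=\{0\}$. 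Second, your invocation of Proposition~\ref{Prop3.2} at this stage is premature: that proposition is used in the present paper only later (in Theorem~\ref{New}) to obtain the multiplier bound, not to establish pseudo-Lipschitzness. Apart from these two imprecisions, your plan is the same as the one the paper attributes to \cite{anli}.
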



We now use Proposition \ref{Propnew} to prove Theorem \ref{thm4.2new}.

Define $y_{*}(t)=\rho \int_{t_{0}}^{t}u_*(s)ds$  as well as a radius function $R_\rho(t):=\rho R(t)$ with $\rho>1$.
We claim that $(x_{*}, y_{*},u_*)$ is a $W^{1,1}$ local minimum  with radius $R_\rho(\cdot)$  for the following problem:
\begin{eqnarray*}
(P_\rho)~~~~~~\min &&  J(x,u):=\int_{t_0}^{t_1} F( x(t), u(t)) dt + f(x(t_0),x(t_1)),\nonumber \\
s.t. && \dot{x}(t)=\phi(x(t), u(t)) \,\quad a.e. \,t\in [t_0,t_1],\nonumber\\
&& \dot{y}(t)=\rho u(t)  \quad a.e. \,t\in [t_0,t_1], \\
&& \Phi(x(t), u(t))\in \Omega \quad a.e. \,t\in [t_0,t_1], \\
&& u(t) \in U \quad a.e. \,t\in [t_0,t_1],\nonumber\\
&& (x(t_0), x(t_1),y(t_0))\in S\times \{0\}.\nonumber
\end{eqnarray*}
Let $(x,y,u)$ be  an admissible pair  for problem $(P_{\rho})$ satisfying
\begin{eqnarray}
&& |( \dot{x}(t),\dot{y}(t))-( \dot{x}_{*}(t),\dot{y}_{*}(t))|\leq R_\rho(t) \mbox{ a.e.},\label{adm1}\\
&& |(x(t), y(t))-(x_{*}(t), y_{*}(t))|\leq \varepsilon  \mbox{ a.e.},
 \int_{t_0}^{t_1}|(\dot{x}(t), \dot{y}(t))-(\dot{x}_{*}(t), \dot{y}_{*}(t))|dt\leq \varepsilon. \quad \label{adm2}
 \end{eqnarray}
Then it is obvious that  $(x(t),u(t))$ is an  admissible pair for $(P)$ with
 $$ |u(t)-u_{*}(t)|\leq R(t), |x(t)-x_{*}(t)|\leq \varepsilon  \mbox{ a.e.},
 \int_{t_0}^{t_1}|\dot{x}(t)-\dot{x}_{*}(t)|dt\leq \varepsilon. $$
 It follows  by the fact  that $(x_*,u_*)$ is a  local minimum of radius $R(\cdot)$ for $(P)$ that
  \begin{equation}
\int_{t_0}^{t_1} F( x_*(t), u_*(t)) dt + f(x_*(t_0),x_*(t_1)) \leq \int_{t_0}^{t_1} F( x(t), u(t)) dt +f(x(t_0),x(t_1)) .\label{ob}
 \end{equation}
Since (\ref{ob}) holding for all admissible pair $(x,y,u)$ satisfying (\ref{adm1})-(\ref{adm2}),  $(x_{*}, y_{*},u_*)$ is a $W^{1,1}$ local minimum of radius $R_{\rho}(\cdot)$ for $(P_{\rho})$.

Denote by
\begin{eqnarray*}
\lefteqn{ S_*^{\varepsilon, R_\rho}(t)}\\
&& :=\left \{(x,y,u)\in \bar{B}((x_*(t),y_*(t)),\varepsilon)\times U: \begin{array}{l}
\Phi(x,u) \in \Omega,\\ |(\phi(x,u)-\dot{x}_*(t), \rho u-\rho u_*(t))|\leq \rho R(t)
\end{array}\right\},\\
\lefteqn{C_*^{\varepsilon, R_\rho}:=}\\
&&cl\{(t,x,y,\phi(x,u),\rho u)\in  [t_0,t_1]\times \mathbb{R}^{n_{x}}\times \mathbb{R}^{n_{u}} \times \mathbb{R}^{n_{x}}\times \mathbb{R}^{n_{u}}: (x,y,u) \in  S_*^{\varepsilon, R_\rho}(t)\}.
\end{eqnarray*}
It is obvious that the compactness of $\tilde{C}_*^{\varepsilon,R}$ implies the compactness of $C_*^{\varepsilon, R_\rho}$.
It is also obvious that $(t,x,y,\phi(x,u),\rho u)\in C_*^{\varepsilon, R_\rho}$ implies that $(t, x, u)\in \tilde{C}_*^{\varepsilon,R}$. Moreover the mixed constraint $\Phi(x,u) \in \Omega$ is independent of $y$. Hence the WBCQ  in Proposition \ref{Propnew}
and the calmness condition hold.
 By  Proposition \ref{Propnew}, there exist an arc $(p,q)$ such that  the nontriviality condition $(\lambda_0, p(t),q(t))\not =0, \forall t\in [t_0,t_1]$ holds, the  transversality condition as in Theorem \ref{thm4.2new}
 holds, the Euler adjoint inclusion  in the form
\begin{eqnarray}
\lefteqn{(\dot{p}(t),\dot{q}(t), 0) \in}\nonumber\\
&&\partial^C \{\langle -p(t), \phi\rangle+\lambda_0F\} ( x_*(t),y_*(t),u_*(t))+\{(0,0)\}\times N^C_{U}(u_*(t))\nonumber \\
&& +co\{ \partial \langle  \lambda,  \Phi\rangle ( x_*(t),u_*(t)): \lambda\in N_{\Omega}(\Phi(x_*(t),u_*(t))\}\mbox{ a.e.}\label{oldEuler}
\end{eqnarray}
holds,
and the Weierstrass condition of radius $R_\rho(\cdot)$  holds in the form that for almost every $t$:
\begin{eqnarray}
&&(x_*(t),u)\in M(0), \, \,  |(\phi(x_*(t),u)-\phi(x_*(t),u_*(t)), \rho (u-u_*(t)))|< \rho R(t)\Longrightarrow\nonumber\\
&&\langle p(t),\phi(x_*(t),u) \rangle-\lambda_{0}F(x_*(t),u)\leq \langle p(t), \phi (x_*(t),u_*(t)) \rangle -\lambda_{0}F(x_*(t),u_*(t)). \nonumber \\
&& \qquad \qquad \label{Weierstrass}
\end{eqnarray}
Because $\phi,F,\Phi$ are independent of $y$, it follows from (\ref{oldEuler}) that $\dot{q}(t)\equiv0$ a.e.. Together with $q(t_1)=0$ implies that ${q}(t)\equiv 0$.  Hence (\ref{oldEuler}) implies
the Euler adjoint inclusion (\ref{EulerNnew}) and the nontriviality condition as in Theorem \ref{thm4.2new}.

Since $\tilde{C}_*^{\varepsilon,R}$ is compact, the set
$$C:=cl\left \{\cup_{t\in[t_0,t_1]} (x_*(t),u)\in M(0):\ |u-u_*(t)|\leq R(t) \right\}   $$
is compact as well. Since $\phi(x,u)$ is locally Lipschitz continuous and $C$ is compact, one can find a positive constant $k^\phi_u$ such that
$$ |\phi(x_*(t), u_1)-\phi(x_*(t),u_2)|\leq k^\phi_u|u_1-u_2| \quad \forall (x_*(t), u_1), (x_*(t), u_2)\in C.$$ Let $(x_*(t), u)\in M(0), |u-u_*(t)|<R(t)$. Then $(x_*(t), u), (x_*(t), u_*(t))\in C$ and hence
$$|(\phi(x_*(t),u)-\phi(x_*(t),u_*(t)), \rho (u-u_*(t)))|\leq  \max\{k^{\phi}_{u},\rho\}|u-u_*(t)|<\max\{k^{\phi}_{u},\rho\}R(t).$$ Take a special $\rho >k_u^\phi$. Then $\max\{k^{\phi}_{u},\rho\}=\rho$ and hence
 (\ref{Weierstrass}) implies that the Weierstrass condition in Theorem \ref{thm4.2new} holds.
Moreover as discussed in \cite[Remark 3.1]{anli}, $\lambda_0$ can be chosen as $1$ in the case of free end point.
  \hfill $\Box$
\end{document}